\newcommand{\Z}{\mathbb{Z}}
\newcommand{\N}{\mathbb{N}}
\newcommand{\R}{\mathbb{R}}
\newcommand{\C}{\mathbb{C}}
\renewcommand{\L}{\mathsf{L}^2}
\renewcommand{\H}{\mathsf{H}}
\newcommand{\HS}{\mathscr{H}}
\renewcommand{\a}{\mathfrak{a}}
\renewcommand{\aa}{\mathbf{a}}
\newcommand{\A}{{\mathscr{A}}}
\renewcommand{\AA}{{\mathbf{A}}}
\newcommand{\Res}{{\mathscr{R}}}
\newcommand{\J}{\mathscr{J}}
\renewcommand{\rho}{\varrho}
\DeclareMathOperator{\dom}{dom}
\newcommand{\la}{\langle}
\newcommand{\ra}{\rangle}
\newcommand{\eps}{\varepsilon}
\newcommand{\e}{_{\varepsilon}}
\newcommand{\ke}{_{k,\varepsilon}}
\newcommand{\je}{_{j,\varepsilon}}
\newcommand{\al}{\alpha}
\newcommand{\be}{\beta}
\renewcommand{\d}{\,\mathrm{d}}
\newcommand{\ds}{\displaystyle}
\newcommand{\Id}{\mathrm{I}}
\newcommand{\cupl}{\bigcup\limits}
\newcommand{\suml}{\sum\limits}
\newcommand{\wt}{\widetilde}
\newcommand{\ceq}{\coloneqq}
\newcommand{\M}{\mathbb{M}}
\theoremstyle{plain}
\newtheorem{theorem}{Theorem}[section]
\newtheorem*{theorem*}{Theorem}
\newtheorem{lemma}[theorem]{Lemma}
\newtheorem*{lemma*}{Lemma}
\theoremstyle{remark}
\newtheorem{remark}[theorem]{Remark}
\newtheorem*{remark*}{Remark} 
\newtheorem*{example*}{Example} 
\theoremstyle{definition}
\numberwithin{equation}{section}
\numberwithin{figure}{section}
\title
[Creating and controlling band gaps in periodic media with small resonators]
{Creating and controlling band gaps in periodic media with small resonators}
\author[Andrii Khrabustovskyi]{Andrii Khrabustovskyi\,$^{1,2}$}
\address{$^1$ Department of Physics, Faculty of Science, University of
	Hradec Kr\'{a}lov\'{e}, Czech Republic} 
\address{$^2$ Department of Theoretical Physics,
	Nuclear Physics Institute of the Czech Academy of Sciences, \v{R}e\v{z}, Czech Republic} 
\email{andrii.khrabustovskyi@uhk.cz}
\author[Evgen Khruslov]{Evgen Khruslov\,$^{3}$}
\address{$^3$  B.~Verkin Institute for Low Temperature Physics and Engineering of the National Academy of Sciences of Ukraine} 
\email{khruslov@ilt.kharkiv.ua}
\begin{document}

	\begin{abstract}
		We investigate   spectral properties of the Neumann Laplacian $\A\e$ on a periodic unbounded domain $\Omega\e$ depending on a small parameter $\eps>0$. The domain $\Omega\e$  is obtained by removing from $\R^n$ $m\in\N$ families  of $\eps$-periodically distributed small resonators. We prove that the spectrum of $\A\e$ has at least $m$ gaps. The first $m$ gaps
		converge  as $\eps\to 0$ to some intervals whose location and lengths can be controlled by a suitable choice the resonators; other gaps (if any) go to infinity. An application to the theory of photonic crystals is discussed.

	\end{abstract}

\subjclass{35B27, 35P05, 47A75}
\keywords{periodic media; resonators; Neumann Laplacian; spectral gaps}

 	\maketitle

	\begin{center}\medskip
		\emph{Dedicated to Professor V.A. Marchenko on the occasion of his birthday}
		\medskip
	\end{center}
	
	\section{Introduction}
	
	The problem addressed in this paper belongs
	to	spectral analysis of periodic  differential operators.
	It is well-known (see, e.g., \cite{E73,Ku16,Ku93}) that the spectrum of such operators has the 
	form of a locally finite union of compact intervals (\textit{bands}). 
	In general the bands may touch each other and even (in the multidimensional case) overlap. 
	The open interval $(\al,\be)$ is called a \emph{gap} if it has an empty intersection with the spectrum, but its endpoints belong to it.
	
	The presence of gaps in the spectrum is not guaranteed. For example, the spectrum of the Laplacian on $\mathbb{R}^n$ has no gaps: $\sigma(-\Delta_{\mathbb{R}^n})=[0,\infty)$.
	Therefore the natural and interesting problem arises here: to construct examples of periodic
	operators with non-void spectral gaps. This  problem has
	been actively studied since mid of the 90th and currently a lot of examples for various classes of periodic operators are available in the literature. We refer to some pioneer articles \cite{FK96a,FK96b,G97,HL00,Zh05},
	further references can be found in the overviews \cite{Ku16,HP03}.
	
	The problem of the spectral gaps opening received a strong motivation   coming from the advances in investigation of novel materials of various sorts, in particular, the so-called \textit{photonic crystals} -- periodic dielectric nanostructure whose
	characteristic feature  is that they strongly affect the
	propagation of light waves at certain optical frequencies, which is caused by gaps in the spectrum of the Maxwell operator or related scalar operators.  In practice, one deals with artificially fabricated photonic crystals, which are  composed of relatively simple materials (dielectrics or metals) in a tricky way and have a spatially periodic structure, typically
	at the length scale of hundred nanometers. For more details on mathematics of photonic crystals we refer to   \cite{Dorfler}.
	
	The next important question in spectral theory of periodic operators concerns the possibility of engineering a prescribed gap structure (i.e., opening of spectral gaps with prescribed locations and lengths) --- by choosing a properly devised material texture.
	In the present paper, we investigate this problem for 
	the Neumann Laplacian $\A\e$ on unbounded  periodic domain $\Omega\e\subset\R^n$ ($n\ge 2$),
	which  is obtained by removing from $\R^n$ $m $ families  of $\eps$-periodically distributed small resonators (see Figure~\ref{fig12}, left picture):
	\begin{gather}\label{Omega:eps:intro}
		\Omega\e= \R^n\setminus \overline{\cupl_{i\in\Z^n}\cupl_{j=1}^m\eps ( R\je + i)},\ m\in\N.
	\end{gather}
	Here $\eps>0$ is a small parameter, the sets $R\je$ (\emph{resonators}), $j\in\{1,\dots,m\}$ 
	have the form 
	$$R\je=(F_j\setminus \overline{B_j})\setminus T\je$$
	where $\overline{B_j}\subset F_j\subset (0,1)^n $ and $T\je$ are thin passages connecting the opposite sides of the domain $F_j\setminus \overline{B_j}$ (see Figure~\ref{fig12}, right picture).
	The passages diameters $\eta\je$ are chosen as follows,
	$$\eta\je=\mathcal{O}(\eps^{\frac{2}{n-1}}).$$
	
	\begin{figure}[h]
		\begin{picture}(435,150)	
			\includegraphics[height=150px]{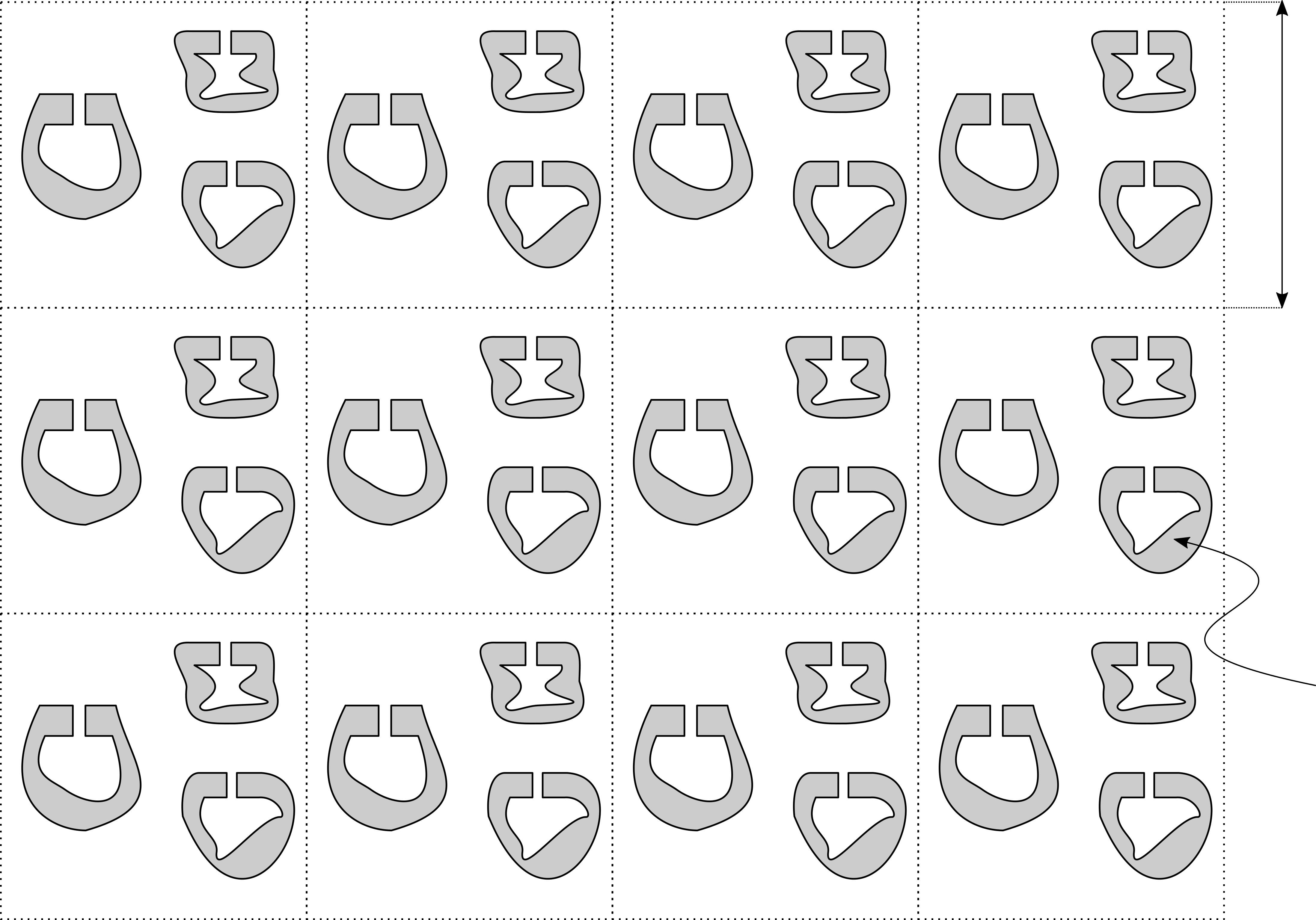}
			\qquad\qquad\qquad\quad
			\includegraphics[height=150pt]{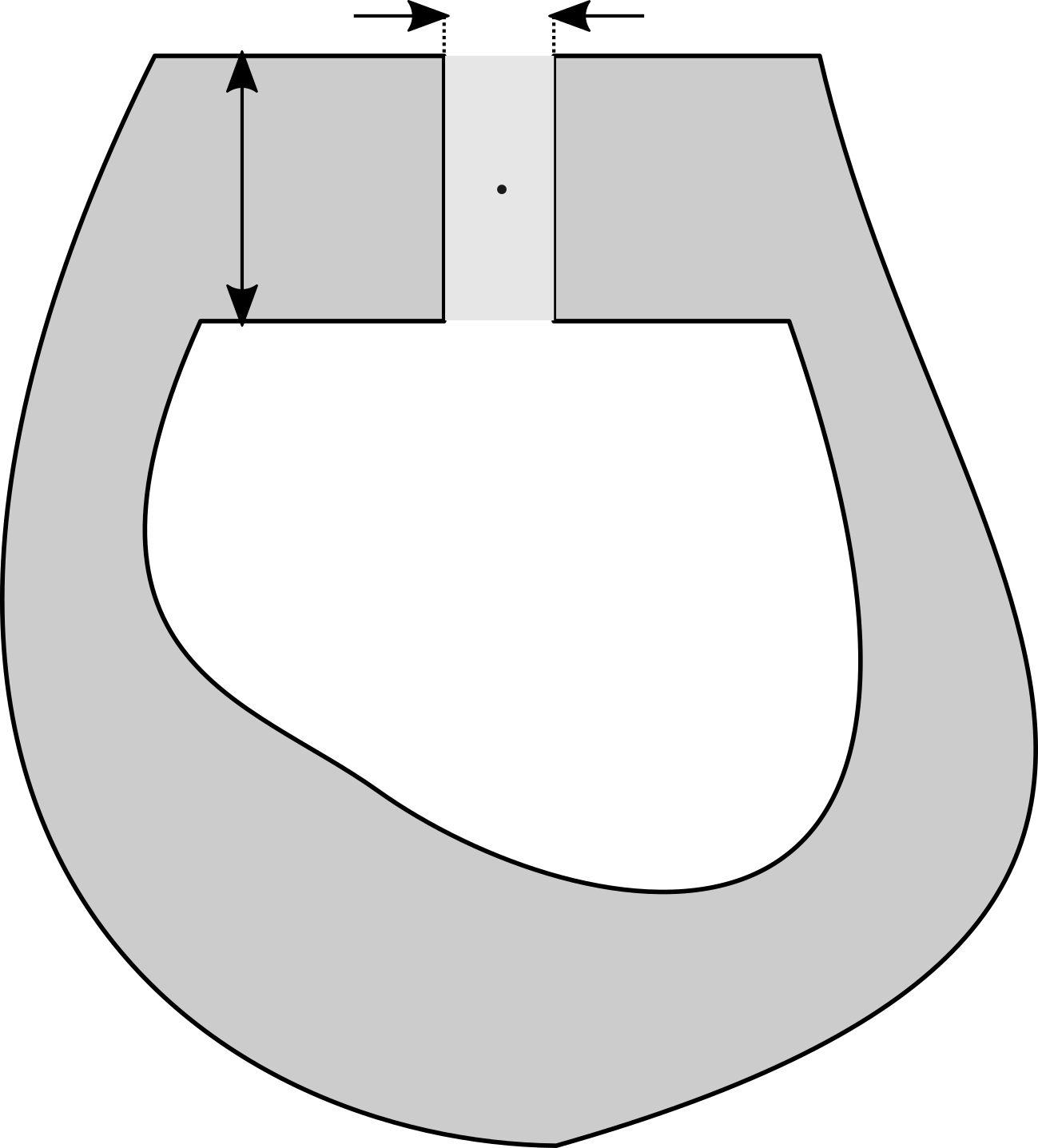}
			
			\put(-213,35){$\eps(R\je+i)$}
			\put(-102,122){$h_j$}
			\put(-78,151){$\eta\je$}
			\put(-219,122){$\eps$}
		\end{picture}
		\caption{\emph{Left:} The domain $\Omega\e$, here $m=3$. The dotted lattice separates different period cells. \emph{Right:} The sets 
			$R\je$ (dark gray color) and $T\je$ (light gray color). The black dot in the center of $T\je$ corresponds to the point $z_j$}\label{fig12}
	\end{figure}	 
	
	We demonstrate that the spectrum $\sigma(\A\e)$ of $\A\e$ has the following properties:
	\begin{itemize}
		\item there is $\Lambda>0$ such that $\sigma(\A\e)$ has $m$ gaps 
		withing the interval $[0,\Lambda\eps^{-2}]$ for sufficiently small $\eps$,
		
		\item the endpoints of these $m$ gaps converges to the endpoints 
		of some pairwise disjoint intervals $(\al_j,\be_j)$. The numbers 
		$\al_j$ and $\be_j$ depend on the lengths   of the passages,  
		the (re-scaled) areas of their cross-sections and the volumes of the domains $F_j$ and $B_j$,    
		
		\item one can choose  the domains $F_j$, $B_j$ and the
		constants standing at the expressions for $\eta\je$ (see \eqref{etaje})  in such a way that
		$(\al_j,\be_j)$ do coincide with predefined intervals.
		
	\end{itemize}\smallskip 
	
	Similar problem was treated by the first author in \cite{Kh14} for ``zero-thickness'' resonators,
	i.e. when $\Omega\e$ is again of the form \eqref{Omega:eps:intro}, but the sets $R\je$ have the form 
	$R\je=\partial B_j\setminus T\je$, where $B_j$ are bounded domains and $T\je$  
	are small subsets of $\partial B_j$. In this case the critical scaling  for the  diameters
	$\eta\je$ of $T\je$ is
	$$
	\eta\je=\mathcal{O}(\eps^{\frac{2}{n-2}})\text{ as }n> 2\qquad\text{and}\qquad
	|\ln(\eps\eta\je)|^{-1}=\mathcal{O}(\eps^{2})\text{ as }n=2.$$

	The problem of opening of spectral gaps having prescribed locations and lengths was also considered in \cite{Kh12} for Laplace-Beltrami operators on periodic Riemannian manifolds, in \cite{Kh13b} for periodic elliptic operators on $\R^n$, in \cite{BK15,Kh20} for periodic quantum graphs, and 
	in \cite{EK18} for periodic Schr\"odinger operators with singular potentials. 
	The proofs in \cite{Kh12,Kh13b} rely on methods of homogenization theory, while 
	in \cite{EK18,BK15,Kh20,Kh14} the approach is close to the one of the present paper (asymptotic analysis of the quasi-periodic, Neumann and Dirichlet  eigenvalue problems on a smallest period cell).
	
	Peculiar effects caused by inserting small resonators are long known.
	For example, a suitably scaled   resonator may abruptly change the spectrum 
	(even if the resonator diameter  is very small) -- the first example goes back to 
	the Courant-Hilbert monograph \cite{CH53}, further investigations were carried out in 
	\cite{AHH91,Sch15}. Another remarkable application of small resonators 
	is the possibility  to construct  materials with frequency-dependent effective properties, with
	large and/or negative permittivities \cite{LS17},  materials with memory \cite{MK06}, etc. (see the overview \cite{Sch17} for more details).\smallskip
	
	In the next section we formulate the problem more precisely and 
	present the main results. We also demonstrate how  to
	apply these results for photonic crystals design.

	\section{Problem setting and main results}\label{sec:2}	
	
	Let $\eps>0$ (the small parameter),
	$n\in\N\setminus\{1\}$ (the space dimension), 
	$m\in\N$ (the number of resonators per a period cell).
	We set 
	\begin{gather}\label{MM}
		\M\ceq \{1,\dots,m\},\quad\M_0\ceq \{0,\dots,m\}. 
	\end{gather}
	In the following, by
	$x'=(x^1,\dots,x^{n-1})$ and $x=(x',x^n)$  we denote the Cartesian coordinates in $\R^{n-1}$ and $\R^{n}$, respectively. \smallskip

	Let   $(F_j)_{j\in\M}$, $(B_j)_{j\in\M}$ be Lipschitz domains in $\R^n$ satisfying
	$$\overline{B_j}\subset F_j,\ \overline{F_j}\subset Y\ceq (0,1)^n,\ j\in\M,\qquad
	\overline{F_i}\cap\overline{F_j}=\emptyset,\ i\not=j.$$ 
	Furthermore, we assume that for each $j\in\M$ there exist 
	$z_j=(z_j',z_j^n)\in \R^n$ and  positive numbers $h_j$ and $d_j$ such that
	\begin{align}\label{pprop1}
		&
		\left\{x=(x',x^n)\in\R^n:\ |x^n-z_j^n|< h_j/2,\  \|x'-z_j'\|_{\R^{n-1}}\leq d_j\right\} \subset F_j\setminus \overline{B_j},
		\\
		\label{pprop2}
		&
		\left\{x=(x',x^n)\in\R^n:\  x^n-z_j^n=   h_j/2,\ \|x'-z_j'\|_{\R^{n-1}}\leq d_j \right\} \subset \partial F_j,
		\\
		\label{pprop3}
		&
		\left\{x=(x',x^n)\in\R^n:\  x^n-z_j^n= - h_j/2,\ \|x'-z_j'\|_{\R^{n-1}}\leq d_j \right\} \subset \partial B_j
	\end{align}
	(here $\|\cdot\|_{\R^{n-1}}$ stands for the Euclidean distance in $\R^{n-1}$).
	We define the passage $T\je$ connecting the opposite sides of $F_j\setminus \overline{B_j}$ via
	\begin{gather}\label{Tke}
		T\je\ceq \left\{x=(x',x^n):\ |x^n-z_j^n|\le h_j/2,\ x'-z_j'\in \eta\je D_j\right\}
		,
	\end{gather}
	where 
	\begin{gather}\label{etaje}
		\eta\je=\eta_j\eps^{2/(n-1)},\ \eta_j>0,
	\end{gather}
	$(D_j)_{j\in\M}$ are connected Lipschitz domains in $\R^{n-1}$ satisfying $0\in D_j$ and
	$\eps$ is sufficiently small in order to have 
	$\eta\je\overline{D_j}\subset\{x'\in\R^{n-1}:\  \|x'\|_{\R^{n-1}}< d_j\}$.
	Finally, we define the domain $\Omega\e$ (see Figure~\ref{fig12}, left picture):
	\begin{gather*}
		\Omega\e= \R^n\setminus \overline{\cupl_{i\in\Z^n}\cupl_{j\in\M}\eps ( R\je+i)}.
	\end{gather*}
	where  the sets $R\je$, $j\in\M$, which will play role of the resonators before scaling (see Figure~\ref{fig12}, right picture), are defined via
	$$R\je=(F_j\setminus \overline{B_j})\setminus\overline{T\je}.$$
	The set $\Omega\e$ is $\Z^n$-periodic with a period cell $\eps Y\e$, where
	$$Y\e\ceq  Y\setminus \overline{\cupl_{j\in\M}R\je}.$$

	Now, we define the (minus) {Neumann Laplacian} $\A\e$ on $\Omega\e$.	 
	In the space $\L(\Omega\e)$ we introduce the sesquilinear form $\a\e$ via
	\begin{gather*}
		\a\e[u,v] =\int_{\Omega\e}\nabla u\cdot\overline{\nabla v} \d x,\quad
		\dom(\a\e)=\H^1 (\Omega\e)
	\end{gather*}
	This form is densely defined, closed, and positive, hence by the first representation theorem \cite[Chapter 6, Theorem 2.1]{K66} there is a the unique self-adjoint and positive operator $\A\e$ satisfying $\dom(\A\e)\subset\dom(\a\e)$ and
	\begin{gather*}
		(\A\e u,v)_{\L(\Omega\e)}= \a\e[u,v],\quad\forall u\in
		\dom(\A\e),\ \forall  v\in \dom(\a\e).
	\end{gather*}
	
	Our goal is to describe the behaviour
	of the spectrum $\sigma(\A\e)$ of $\A\e$ as $\eps\to 0$.
	To state the results we have to introduce some notations.
	
	For $j\in\M$ we denote 
	\begin{gather}\label{Ak}
		\al_j:=\frac{\eta_j^{n-1}  |D_j|}{ h_j  |B_j|}\,,
	\end{gather}
	where the notation $|\cdot|$ stands for the volume of either a domain in $\R^n$ (as $B_j$) or
	a domain in $\R^{n-1}$ (as $D_j$). 
	We assume that the numbers $\al_j$ are pairwise distinct; without loss of generality
	we may assume that 
	\begin{gather}\label{alpha-cond}
		\al_j<\al_{j+1},\ j\in\{1,\dots,m-1\}.
	\end{gather}
	Furthermore, we consider the following   function:
	\begin{gather}\label{mu_eq}
		F(\lambda):=1+\sum_{j\in\M}\frac{\al_j |B_j|}{ |B_0|(\al_j-\lambda)},
	\end{gather}
	where  the set $B_0$ is defined by
	$$
	B_0\ceq Y\setminus\overline{\cup_{j\in\M}F_j}. 
	$$
	It is easy to see that $F(\lambda)$ has exactly $m$ zeros, they are real and interlace with $\al_j$ provided \eqref{alpha-cond} holds. We denote
	them $\be_j$, $j\in\M$ assuming that they are renumbered in the ascending order; then one has
	\begin{gather}\label{inter}
		\al_j<\be_j<\al_{j+1},\quad j\in\{1,\dots,m-1\},\quad
		\al_m<\be_m<\infty.
	\end{gather}

	We are now in position to formulate the main results of this work.
	
	\begin{theorem}\label{th1}
		There exists $\Lambda>0$  depending on the set $B_0$ only such that the
		the spectrum of $\A\e$ has the following form within the interval $[0,\Lambda\eps^{-2}]$ for sufficiently small $\eps$:
		\begin{gather}\label{th1:1}
			\sigma(\A\e)\cap [0,\Lambda\eps^{-2}]=[0,\Lambda\eps^{-2}]\setminus \left(\bigcup\limits_{j\in\M} (\al\je,\be\je)\right)
		\end{gather}
		The closures of the intervals
		$(\al\je,\be\je)\subset (0,\Lambda\eps^{-2})$ are pairwise disjoint and their endpoints satisfy
		\begin{gather}\label{th1:2}
			\lim_{\eps\to 0}\al\je= \al_j ,\quad
			\lim_{\eps\to 0}\be\je= \be_j.
		\end{gather}
	\end{theorem}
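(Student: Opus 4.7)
The plan is to combine the Floquet-Bloch decomposition of $\A\e$ with Dirichlet-Neumann bracketing on a single period cell, and then carry out an asymptotic analysis of the resulting auxiliary eigenvalue problems as $\eps\to 0$. Since $\Omega\e$ is $\eps\Z^n$-periodic with period cell $\eps Y\e$, Floquet theory gives $\sigma(\A\e)=\bigcup_\theta \sigma(\A\e^\theta)$, where $\A\e^\theta$ is the Laplacian on $\eps Y\e$ with Neumann BC on the resonator boundaries and quasi-periodic BC of phase $\eps\theta$ on opposite faces of $\eps\partial Y$. Writing $\lambda_k(\eps,\theta)$ for the $k$-th eigenvalue of $\A\e^\theta$, the $k$-th band of $\A\e$ equals $[\lambda_k^-(\eps),\lambda_k^+(\eps)]=[\min_\theta\lambda_k(\eps,\theta),\max_\theta\lambda_k(\eps,\theta)]$, and standard Dirichlet-Neumann bracketing gives $\mu_k^N(\eps)\le\lambda_k(\eps,\theta)\le\mu_k^D(\eps)$, where $\mu_k^{N,D}$ are the $k$-th eigenvalues of the problem on $\eps Y\e$ with Neumann (resp.\ Dirichlet) BC on $\eps\partial Y$ and Neumann BC on the resonator boundaries.

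The heart of the proof is the asymptotics of $\mu_k^{N,D}(\eps)$ in the window $[0,\Lambda\eps^{-2}]$. The geometry dictates a natural ansatz: eigenfunctions in this range are approximately constant, with values $u_0$ and $u_j$, on the scaled bulk regions $\eps B_0$ and $\eps B_j$ -- justified by the Poincar\'e inequality, provided $\Lambda$ is chosen below the first nonzero Neumann eigenvalue of $-\Delta$ on $B_0$ (which is what lets $\Lambda$ depend on $B_0$ alone) -- while in each thin passage $\eps T\je$ the eigenfunction is approximately affine in the axial direction $x^n$, since the transverse Laplacian on $\eps T\je$ has first nonzero eigenvalue of order $(\eps\eta\je)^{-2}=\eta_j^{-2}\eps^{-2(n+1)/(n-1)}\gg \Lambda\eps^{-2}$. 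Substituting this into the Rayleigh quotient and using the identity $\eta_j^{n-1}|D_j|/h_j=\al_j|B_j|$ together with the volumes $\eps^n|B_0|,\eps^n|B_j|$ and the passage cross-section area $(\eps\eta\je)^{n-1}|D_j|=\eta_j^{n-1}\eps^{n+1}|D_j|$ reduces the limit problem to the $(m+1)$-dimensional generalised eigenvalue problem for the discrete quadratic form $\mathfrak{q}(u_0,\dots,u_m)=\sum_{j\in\M}\al_j|B_j|(u_j-u_0)^2$ against the mass form $|B_0|u_0^2+\sum_j|B_j|u_j^2$. Its spectrum is computed by hand: in the Neumann case the eigenvalues are $0$ together with the $m$ roots $\be_j$ of $F(\lambda)=0$; in the Dirichlet case the forced constraint $u_0=0$ (coming from $u=0$ on $\eps\partial Y$) yields precisely $\al_1,\dots,\al_m$. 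The choice of $\Lambda$ ensures at the same time that $\mu_{m+2}^N(\eps)$ and $\mu_{m+1}^D(\eps)$ both exceed $\Lambda\eps^{-2}$, so no further eigenvalues enter the window.

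The rigorous execution is by \emph{upper bounds} on $\mu_k^{N,D}$ via min-max with explicit trial functions built from the ansatz (smoothly glued across the bulk-passage interfaces), together with \emph{lower bounds} via a compactness argument: given a normalised eigenfunction sequence with bounded eigenvalue, an $\eps$-uniform extension operator $\H^1(\eps Y\e)\to\H^1(\eps Y)$ lets us extract a limit that must satisfy the finite-dimensional problem associated with $\mathfrak{q}$, the specific scaling $\eta\je=\eta_j\eps^{2/(n-1)}$ being exactly what makes the passage-energy contribution survive at the same order as the $L^2$-mass. Combined with bracketing, this yields $\lambda_k^+(\eps)\le\al_k+o(1)$ and $\lambda_{k+1}^-(\eps)\ge\be_k-o(1)$ for $k\in\M$, enclosing the gaps $(\al\je,\be\je)$. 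To verify the gaps are maximal (not strictly contained in wider gaps), I would analyse $\lambda_k(\eps,\theta)$ at $\theta=0$, where the ansatz is fully admissible and the first $m+1$ eigenvalues converge to $0,\be_1,\dots,\be_m$ (giving $\lambda_k^-(\eps)\le\be_{k-1}+o(1)$ with $\be_0:=0$), and at the antiperiodic momentum $\theta^*=(\pi/\eps,0,\dots,0)$, where the BC forces $u_0=0$ in the ansatz and the limits become $\al_1,\dots,\al_m$ (giving $\lambda_k^+(\eps)\ge\al_k-o(1)$). Continuity of $\theta\mapsto\lambda_k(\eps,\theta)$ then ensures each band is a connected interval spanning $[\be_{k-1}+o(1),\al_k+o(1)]$, and \eqref{th1:1}--\eqref{th1:2} follow. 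The main obstacle, I expect, is the lower-bound step: the $\eps$-uniform Sobolev and extension estimates on the perforated cell $\eps Y\e$ and the precise accounting of mass and gradient energy in the shrinking passages, where the subtle $\eps^{2/(n-1)}$ scaling of $\eta\je$ must be handled with care to show that the formal matched ansatz captures the entire spectral content of the window $[0,\Lambda\eps^{-2}]$.
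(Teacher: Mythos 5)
Your overall route matches the paper's: Floquet--Bloch reduction, Dirichlet--Neumann bracketing on the rescaled cell $Y_\eps$, and passage to finite-dimensional limiting problems for the Neumann/periodic and Dirichlet/antiperiodic fibres, correctly yielding the Neumann limit spectrum $\{0,\be_1,\dots,\be_m\}$ and the Dirichlet limit spectrum $\{\al_1,\dots,\al_m\}$. Where the paper invokes the abstract eigenvalue-convergence scheme of Iosif'yan--Oleinik--Shamaev with an explicit identification operator $\J_\eps$, you propose a uniform extension plus compactness; these are close in substance and both workable.

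However, there is a genuine gap in your determination of $\Lambda$. You claim it suffices that ``$\mu^N_{m+2}(\eps)$ and $\mu^D_{m+1}(\eps)$ both exceed $\Lambda\eps^{-2}$, so no further eigenvalues enter the window,'' with $\Lambda$ chosen below the first nonzero Neumann eigenvalue of $B_0$. This is not the right logical target. Bounding $\mu^N_{m+2}$ from below places bands $L_{m+2,\eps},L_{m+3,\eps},\dots$ entirely above $\Lambda\eps^{-2}$, and bounding $\mu^D_{m+1}$ from below only gives an upper bound for the $(m+1)$-th band; neither controls the \emph{top endpoint} of the $(m+1)$-th band. If $\sup L_{m+1,\eps}<\Lambda\eps^{-2}$, an extra $(m+1)$-th gap could open inside $[0,\Lambda\eps^{-2}]$, and your criteria would not rule it out. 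What one actually needs is $\sup L_{m+1,\eps}=\max_\theta\lambda_{m+1}(\AA^\theta_\eps)\geq\Lambda\eps^{-2}$. The paper's Subsection~\ref{subsec:3:2} gets this by a Neumann decoupling at $\overline{T_{j,\eps}}\cap\overline{B_0}$ \emph{while retaining the $\theta$-quasi-periodic conditions on $\partial Y$}: the decoupled operator has $m$ zero eigenvalues (one per decoupled $B_j\cup T_{j,\eps}$) and its $(m+1)$-th eigenvalue is $\eps^{-2}\Lambda^\theta$, where $\Lambda^\theta>0$ for $\theta\neq 0$ is the lowest eigenvalue of the Laplacian on $B_0$ with Neumann conditions on $\cup_j\partial F_j$ and $\theta$-quasi-periodic conditions on $\partial Y$. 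Form-domain monotonicity then gives $\lambda_{m+1}(\AA^\theta_\eps)\geq\eps^{-2}\Lambda^\theta$, and $\Lambda:=\max_\theta\Lambda^\theta$ provides the required lower bound for $\sup L_{m+1,\eps}$. Note this $\Lambda$ depends on the \emph{quasi-periodic}, not Neumann, spectrum of $B_0$ (though still on $B_0$ only); a purely Neumann decoupling of $B_0$ from the resonators has a zero first eigenvalue on the $B_0$ component and produces no useful bound. Without this decoupling step your proof does not establish that the number of gaps in $[0,\Lambda\eps^{-2}]$ is at most $m$.
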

	
	Our second result states that  one can choose the resonators in such a way that the limiting intervals $(\al_j,\be_j)$ coincide with prescribed intervals. 
	
	\begin{theorem}\label{th2}
		Let $(\wt \al_j)_{j\in\M}$  and $(\wt \be_j)_{j\in\M}$  be positive numbers satisfying
		\begin{gather}\label{inter+}
			\wt \al_j<\wt \be_j<\wt \al_{j+1},\quad j\in\{1,\dots,m-1\},\quad
			\wt \al_m<\wt \be_m<\infty.
		\end{gather}
		Then one can choose  the domains $F_j$, $B_j$ and the
		numbers $\eta_j$ in such a way that
		\begin{gather*}
			\al_j=\wt \al_j,\quad \be_j=\wt \be_j,\quad j\in\M.
		\end{gather*}	
		
	\end{theorem}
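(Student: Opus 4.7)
The plan is to reverse-engineer all geometric data from the target numbers $(\wt\al_j,\wt\be_j)_{j\in\M}$. Formula \eqref{Ak} expresses $\al_j$ as an explicit product of elementary geometric quantities, so once $h_j$, $|D_j|$, $|B_j|$ are fixed, the equation $\al_j=\wt\al_j$ determines $\eta_j$ uniquely. The nontrivial task is to match the $\be_j$: they are defined only implicitly as the zeros of the rational function $F$ in \eqref{mu_eq}, and this matching must be achieved through a suitable choice of the volume ratios $|B_j|/|B_0|$.

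First I would solve the inverse problem for $F$. Setting $\al_j\ceq\wt\al_j$ and $c_j\ceq \al_j|B_j|/|B_0|$, I note that $F$ is a rational function with $F(\lambda)\to 1$ as $|\lambda|\to\infty$ and simple poles exactly at the $\wt\al_j$; requiring zeros at the $\wt\be_j$ then forces
\[
F(\lambda)=\prod_{k\in\M}\frac{\wt\be_k-\lambda}{\wt\al_k-\lambda}.
\]
Computing the residue at $\lambda=\wt\al_j$ on both sides yields the closed formula
\[
c_j=\frac{\prod_{k\in\M}(\wt\be_k-\wt\al_j)}{\prod_{k\in\M,\,k\ne j}(\wt\al_k-\wt\al_j)}.
\]
A direct count of negative factors based on the interleaving \eqref{inter+} shows that numerator and denominator each carry exactly $j-1$ negative terms, so $c_j>0$ for every $j\in\M$. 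This is the key algebraic step of the proof.

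It then remains to realize the positive ratios $|B_j|/|B_0|=c_j/\wt\al_j$ by an honest geometric construction. I would take each $B_j$ as a small ball and each $F_j$ as a Lipschitz pill-shaped neighbourhood of $B_j$ that accommodates the straight cylindrical cavity of height $h_j$ and transverse radius $d_j$ required by \eqref{pprop1}--\eqref{pprop3}, placed in pairwise disjoint positions inside $Y=(0,1)^n$. Setting $|B_j|=\tau\, c_j/\wt\al_j$ with $|F_j|$ only slightly larger than $|B_j|$, the identity $|B_0|=1-\sum_k|F_k|$ becomes a linear equation for $\tau$ whose solution is positive and admits a whole interval of feasible values, leaving ample freedom in the shapes of $F_j$ and $D_j$ and in the length $h_j$. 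Finally, setting
\[
\eta_j\ceq \Bigl(\frac{\wt\al_j\, h_j\, |B_j|}{|D_j|}\Bigr)^{1/(n-1)}
\]
gives $\al_j=\wt\al_j$ by \eqref{Ak}, while the $\be_j$ coincide with the $\wt\be_j$ by construction of $F$. The only genuine obstacle is the positivity of the residues $c_j$, which is precisely what the interleaving hypothesis \eqref{inter+} supplies; the remainder is bookkeeping with volumes inside the unit cell and free choice of the passage parameters.
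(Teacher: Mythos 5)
Your algebraic core — reverse-engineering the volume ratios $|B_j|/|B_0|$ by writing $F$ in partial-fraction form, noting that its poles, its required zeros and its limit at infinity pin it down uniquely as $F(\lambda)=\prod_{k}(\wt\be_k-\lambda)/(\wt\al_k-\lambda)$, and reading the residues — is correct and gives exactly the data the paper obtains via the linear system \eqref{system}: your $c_j=\wt\al_j\,\rho_j$ reproduces the closed formula \eqref{prod}. Your route is, if anything, more self-contained than the paper's, which cites \cite[Lemma~4.1]{Kh12} for the solvability of \eqref{system} and for the product formula; you derive the same thing directly by a residue computation, and your sign-counting argument for positivity is the same observation the paper makes when it notes $\mathrm{sign}(\wt\be_i-\wt\al_j)=\mathrm{sign}(\wt\al_i-\wt\al_j)$ for $i\ne j$. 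This is a genuine, if small, improvement in readability and self-containedness.

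There is, however, one concrete slip in the geometric realization. You propose to take each $B_j$ to be a round ball. This cannot satisfy the hypothesis \eqref{pprop3}, which demands that $\partial B_j$ contain a flat $(n-1)$-dimensional disc of radius $d_j>0$ at height $x^n = z_j^n - h_j/2$ (this is where the bottom face of the passage $T_{j,\eps}$ must sit); a round sphere contains no flat piece. The same constraint \eqref{pprop2} forces a flat disc on $\partial F_j$, which your pill shape does provide, but you have overlooked the corresponding requirement on $\partial B_j$. The paper sidesteps this by choosing both $F_j$ and $B_j$ to be coaxial hyperrectangles, related by a homothety of ratio $\gamma\in(0,1)$, which automatically supplies matching flat faces top and bottom and makes the volume ratio $|B_j|/|F_j|=\gamma^n$ transparent. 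Replacing your ball by any Lipschitz domain with a flat horizontal patch on its boundary — a hyperrectangle, a cylinder, or a capsule with one flat cap — repairs the construction without affecting any of the volume bookkeeping; the rest of your argument then goes through.

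One remark about precision in the bookkeeping step: the relation you need is an exact identity $|B_j|/|B_0| = \rho_j$, not an approximation, so the phrase ``$|F_j|$ only slightly larger than $|B_j|$'' should be replaced by a fixed parametrization (as in the paper, where $|F_j|=\tau_j$ and $|B_j|=\gamma^n\tau_j$, and a single normalization constant is solved for exactly). That is a presentational issue rather than a mathematical gap.
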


	Before to proceed to the proof of the above results we briefly demonstrate how to 
	apply them for constructing periodic
	$2D$ photonic crystals; for more details see \cite{KK15}. 
	
	We introduce the following sets in $\R^3$ (see Figure~\ref{fig3}):
	$$\wt\Omega\e=\left\{(x^1,x^2,z)\in \R^3:\ x=(x^1,x^2)\in \Omega\e,\ z\in\R\right\},\quad \wt R\e=\R^3\setminus \wt\Omega\e,$$
	where  $\Omega\e\subset \R^2$ is a periodic domain being defined above. 
	We assume
	that $\wt\Omega\e$ is occupied by a dielectric medium
	with the electric
	permittivity and the magnetic permeability being equal to $1$,
	while the set $\wt R\e$ is made of a
	perfectly conducting material.  
	
	\begin{figure}[h]
		\begin{picture}(400,150)
			\includegraphics[height=150px]{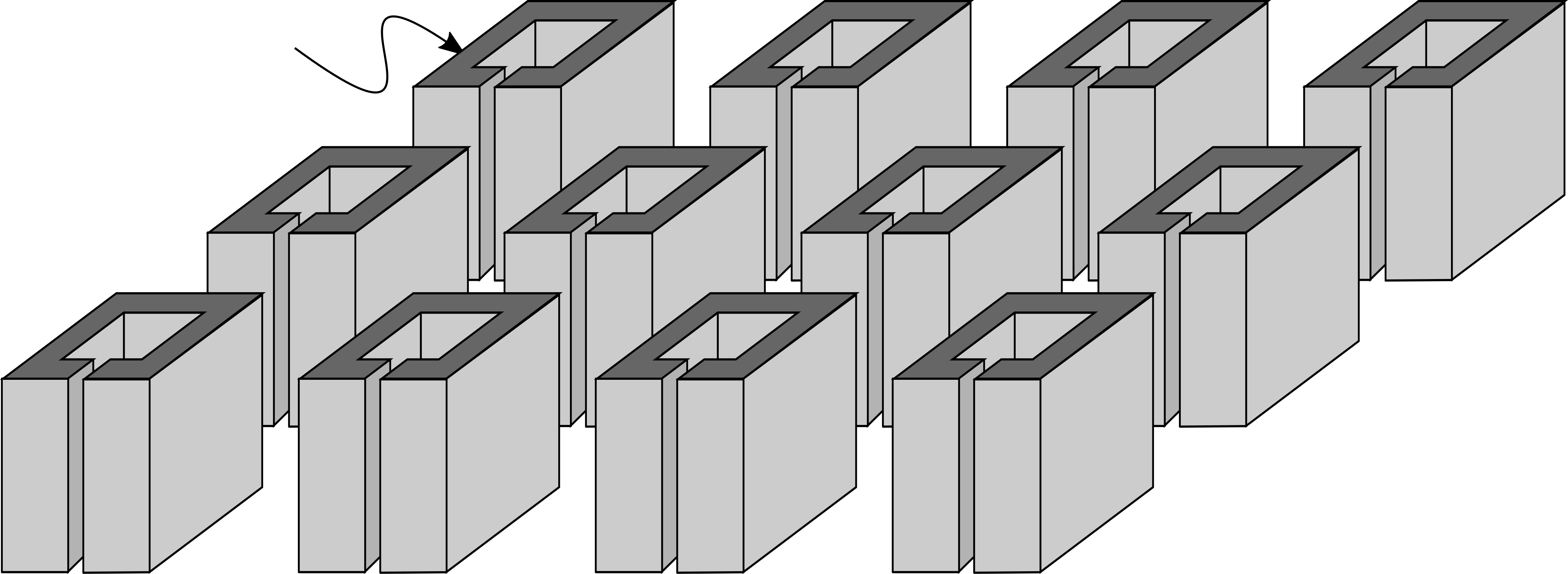}
			\put(-346,135){$\wt R\e$}
		\end{picture}
		\caption{$2D$ photonic crystal. The union of vertical columns $\wt R\e$ is made from a
			perfectly conducting material, while the rest space is occupied by a dielectric}\label{fig3}
	\end{figure}
	
	It is well-known that
	the propagation of electromagnetic waves in the dielectric $\wt\Omega\e$ is governed by the Maxwell operator $\mathscr{M}\e$ acting on $U=(E,H)$ ($E$ and $H$ are the electric and magnetic fields, respectively)
	as follows,
	$$\mathscr{M}\e U=\left( i\hspace{2pt}\nabla\times H,\hspace{2pt} -i\hspace{2pt}\nabla\times E\right),$$
	subject to the   conditions
	\begin{gather*}
		\nabla\cdot E=\nabla\cdot H=0\text{ in }\wt\Omega\e,\quad
		{E}_\tau=0,\ {H}_\nu=0\text{ on }\partial\wt R\e.
	\end{gather*}
	Here ${E}_\tau$ and ${H}_\nu$ are the tangential and normal components of ${E}$ and ${H}$, respectively. 
	
	In the following we focus on the case when $E,H$ depends on $x_1,x_2$ only, i.e. the waves propagated along the plane $\{z=0\}$. It is known that if the medium is periodic in two directions and
	homogeneous with respect to the third one (\emph{$2D$ medium}), then the analysis of the Maxwell operator  reduces to the analysis of scalar elliptic operators on the two-dimensional cross-section $\Omega\e$. Namely, we denote 
	\begin{gather*}
		J=\big\{(E,H):\ \nabla\cdot E=\nabla\cdot H=0\text{ in }\wt\Omega\e,\ E_\tau=H_\mu=0\text{ on }\partial\wt R\e\big\},\\
		J_E=\{(E,H)\in J:\ E_1=E_2=H_3=0\},\quad J_H=\{(E,H)\in J:\ H_1=H_2=E_3=0\}.
	\end{gather*}
	The elements of   $J_E$ and $J_H$ are called TE (Transverse Electric)- and 
	TM(Transverse Magnetic)-polarized waves, respectively.   $J_E$ and $J_H$ are invariant subspaces of $\mathscr{M}\e$, they are $\L$-orthogonal, and each $U\in J$ can be represented in unique way as
	$U=U_E+U_H $ with $U_E\in J_E,\ U_H\in J_H$. Consequently, one has 
	\begin{gather}\label{specM}
		\sigma(\mathscr{M}\e)=\sigma(\mathscr{M}\e|_{J_E})\cup\sigma(\mathscr{M}\e|_{J_H}).
	\end{gather}
	
	We denote by $\mathcal{A}^D\e$ and $\mathcal{A}\e$, respectively, the Dirichlet and the Neumann Laplacians on $\Omega\e$. One can be show  (see, e.g, \cite{HMW61}) that 
	\begin{gather}\label{specM+}
		\omega\in\sigma(\mathscr{M}\e|_{J_E})\ \Leftrightarrow\
		\omega^2\in\sigma(\mathcal{A}^D\e)\quad\text{and}\quad \omega\in\sigma(\mathscr{M}\e|_{J_H})
		\ \Leftrightarrow\ \omega^2\in\sigma(\mathcal{A}\e).
	\end{gather}
	The spectrum of $\A\e$ is already described, see Theorems~\ref{th1}--\ref{th2}.
	As for the spectrum of $\A\e^D$, one can easily derive (cf.~\cite[Lemma 3.1]{KK15}) the 
	following Poincare-type inequality:
	$$\a^D\e[u,u] \geq C\eps^{-2}\|u\|^2_{\L(\Omega\e)},\ \forall u\in\dom (\a^D\e)$$ 
	(the constant $C>0$ is independent of $\eps$, $\a^D\e$ is the form associated with $\A^D\e$).  Consequently, 
	\begin{gather}\label{dir} 
		\inf\sigma(\mathcal{A}^D\e) \to \infty,\ \eps\to 0.
	\end{gather} Then  by virtue of 
	Theorem \ref{th1}, \eqref{specM}--\eqref{dir} we conclude that for an arbitrary 
	(large enough) $L>0$ the Maxwell operator
	$\mathscr{M}\e$ has $2m$ gaps in $[-L,L]$ when $\eps$ is sufficiently small. These gaps converge to the intervals $\pm(\sqrt{\al_j},\sqrt{\be_j})$, whose location and lengths can be controlled via a suitable choice of the resonators (see Theorem~\ref{th2}).
	\smallskip
	
	The rest of the paper is devoted to the proof of the main results. 
	In Section~\ref{sec:3} we prove Theorem~\ref{th1}:
	in Subsection~\ref{subsec:3:1} we sketch some elements of  Floquet-Bloch theory   establishing a relationship between the spectrum of $\A\e$ and the spectra of certain operators on $Y\e$; 
	in Subsection~\ref{subsec:3:2} we detect $\Lambda>0$ such that $\sigma(\A\e)$ has at most $m$ gaps within $[0,\Lambda\eps^{-2}]$; 
	in Subsection~\ref{subsec:3:3} we recall the abstract result from \cite{IOS89} serving 
	to describe the convergence of  eigenvalues of operators in varying Hilbert spaces; 
	using this abstract  result we complete the proof in  Subsections~\ref{subsec:3:4}--\ref{subsec:3:6}. In Section~\ref{sec:4}
	we prove Theorem~\ref{th2}.

	\section{Proof of Theorem~\ref{th1}} \label{sec:3} 
	
	In the following, if $\mathscr{A}$ is a self-adjoint operator with purely discrete spectrum bounded from below and accumulating at $\infty$, we denote by $\lambda_j(\mathscr{A})$ its $k$th eigenvalue, where, as usual, the eigenvalues are arranged in the ascending order and repeated according to their multiplicities. 
	
	By $C,\,C_1,\dots$ we denote generic constants being independent of $\eps$ and of functions appearing in the estimates and equalities where these constants occur.
	
	\subsection{Preliminaries}	\label{subsec:3:1} 
	
	The operator $\A\e$ is $\Z^n$-periodic with the period cell $\eps Y\e$. 
	It is convenient to work further with a period cell $Y\e$, whose internal boundary $\partial Y$ is 
	$\eps$-independent. Thereby, we set 
	$$\Xi\e\ceq\eps^{-1}\Omega\e= \R^n\setminus \overline{\cupl_{i\in\Z^n}\cupl_{j\in\M} (R\je+i)},$$
	and introduce the operator
	$\AA\e$ in $\L(\Xi\e)$ via
	$$ 
	\AA\e=-\eps^{-2}\Delta_{\Xi\e},
	$$
	where $\Delta_{\Xi\e}$ is the Neumann Laplacian on $\Xi\e$.
	The operator $\AA\e$ is periodic with respect to the  period cell $Y\e$, and it is easy to see that
	\begin{gather}\label{sigmaAsigmaA}
	\sigma(\AA\e)=\sigma(\A\e).
	\end{gather}

	The Floquet-Bloch theory (see, e.g., \cite{E73,Ku16,Ku93}) establishes a relationship between the spectrum of 
	$\AA\e$ and the spectra of certain operators on $Y\e$. Namely,  
	let $$\theta=(\theta_1,\dots,\theta_n)\in [0,2\pi)^n.$$ 
	We introduce the space $\H^{1,\theta}(Y\e)$, which consists of functions from $\H^1(Y\e)$ satisfying the following conditions at the opposite faces of $\partial Y$:
	\begin{gather}\label{quasi}
		\forall k\in\{1,\dots,n\}:\  u(x+ e_k)=\exp(i\theta_k) u(x)\  \text{for}\;x=\underset{^{\overset{\qquad\quad\ \uparrow}{\qquad\quad
					k\text{-th place}}\qquad }}{(x_1,x_2,\dots,0,\dots,x_n)},
	\end{gather}
	where $e_k={(0,0,\dots,1,\dots,0)}$.
	In the space $\L(Y\e)$ we introduce the sesquilinear form $\aa^\theta\e$,
	\begin{gather*}
		\aa^\theta\e[u,v]=
		\eps^{-2}\int_{Y\e}\nabla u\cdot \overline{\nabla v} \d x,\quad \dom(\aa^\theta\e)=\H^{1,\theta}(Y\e).
	\end{gather*}
	Let $\AA^\theta\e$ be the associated with this form self-adjoint operator. 
	This operator acts as
	$$-\eps^{-2}\Delta.$$
	The functions $u\in\dom(\AA^\theta\e)$ belong to $\H^2_{\rm loc}(Y\e)$ and, besides \eqref{quasi},
	satisfy
	\begin{gather}\label{quasi+}
		\forall k\in\{1,\dots,n\}:\ \frac{\partial u}{ \partial x_k}(x+ e_k)=\exp(i\theta_k) \frac{\partial u}{ \partial x_k}(x)\  \text{
			for}\;x=\underset{^{\overset{\qquad\quad\uparrow}{\qquad\quad\
					k\text{-th place}}\qquad }}{(x_1,x_2,\dots,0,\dots,x_n)}
	\end{gather}

	The
	spectrum of $\mathbf{A}^\theta\e$ is purely discrete. 
	The Floquet-Bloch theory yields
	\begin{gather}\label{repres1}
		\ds\sigma(\AA\e)=\cupl_{k\in\N} L\ke,
		\text{ where }L\ke\ceq \cup_{\theta\in [0,2\pi)^n}
		\big\{\lambda_j(\AA^\theta\e)\big\},
	\end{gather}
	and moreover, for any fixed $k\in\N$ the set $L\ke$ is a compact interval 
	(\emph{the $k$th spectral band}).

	Along with the operators $\AA\e^\theta$ we also introduce the operators $\mathbf{A}\e^N$ and $\mathbf{A}\e^D$, which differ from $\mathbf{A}\e^\theta$ only by the boundary conditions at $\partial Y$: instead of the $\theta$-conditions we impose   the Neumann and the Dirichlet ones, respectively.
	More precisely, let $\mathbf{A}\e^N$ and $\mathbf{A}\e^D$ be the operators in $\L(Y\e)$ being  	associated with the sesquilinear forms $\aa\e^N$ and $\aa\e^D$ with the domains
	\begin{gather*}
		\aa\e^N[u,v]=\aa\e^D[u,v]=\eps^{-2}\int_{Y\e}\nabla u\cdot \overline{\nabla v} \d x,
		\\	
		\dom(\aa\e^N)=\H^1(Y\e)\text{\quad and\quad }
		\dom(\aa\e^D)=\left\{u\in \H^1(Y\e):\ u\restriction_{\partial Y}=0\right\}
	\end{gather*}
	The spectra of these operators  are purely discrete.  One has
	$$
	\forall\theta\in [0,2\pi)^n:\quad \dom(\aa\e^N)\supset\dom(\aa\e^\theta)\supset\dom(\aa\e^D),
	$$
	whence, by the min-max principle \cite[Section~4.5]{De95}, we get
	\begin{gather}\label{enclosure}
		\forall k\in \mathbb{N},\ \forall\theta\in [0,2\pi)^n:\quad
		\lambda_k(\mathbf{A}\e^N) \leq 
		\lambda_k(\mathbf{A}\e^\theta) \leq
		\lambda_k(\mathbf{A}\e^D).
	\end{gather}
	
	\subsection{Determination of $\Lambda$}\label{subsec:3:2}
	In this subsection we detect $\Lambda>0$ such that the spectrum of  $\A\e$ (or, equivalently, the spectrum of $\AA\e$, cf. \eqref{sigmaAsigmaA}) has at most $m$ gaps in the interval $[0,\Lambda\eps^{-2}]$.
	
	Let   $\theta\in [0,2\pi)^n$. 
	We consider the  following operator in 
	$\L(Y\e)=\L(B_0)\oplus
	\left(\oplus_{j\in\M} \L(B_j\cup T\je)\right)$,
	$$
	\mathbf{A}^{\theta,{\rm dec}}\e= \left(-{ \eps^{-2}}\Delta^{\theta}_{B_0}\right)\oplus
	\left(\oplus_{k=1}^m \left(-{ \eps^{-2}}\Delta_{B_j\cup T\je}\right)\right),
	$$
	where $\Delta^{\theta}_{B_0}$ is the  Laplace operator on $B_0$ subject to the Neumann conditions on $\cup_{j\in \M}\partial F_j$ and conditions \eqref{quasi}, \eqref{quasi+} on $\partial Y$, and $\Delta_{B_j\cup T\je}$ is the Neumann Laplacian on $B_j\cup T\je$, $j\in\M$.
	By $\aa^{\theta,{\rm dec}}\e$ we denote the form associated with $\mathbf{A}^{\theta,{\rm dec}}\e$. In fact, $\mathbf{A}^{\theta,{\rm dec}}\e$ differs from $\mathbf{A}^{\theta}\e$
	by the Neumann decoupling at $\overline{T\je}\cap \overline{B_0}$.

	It is easy to see that 
	\begin{gather*}
		\dom(\aa^{\theta,{\rm dec}}\e)\supset \dom(\aa^{\theta}\e)	
		\quad\text{and}\quad
		\aa^{\theta,{\rm dec}}\e[u,u]= \aa^{\theta}\e[u,u],\ \forall u\in \dom(\aa^{\theta}\e),
	\end{gather*}
	whence, by the min-max principle, we get
	$$
	\forall k\in\M:\quad \lambda_k(\AA^{\theta,{\rm dec}}\e)\leq \lambda_k(\AA^{\theta}\e).
	$$
	The first $m$ eigenvalues of $\mathbf{A}^{\theta,{\rm dec}}\e$ are equal to zero, 
	while the $(m+1)$th eigenvalue equals $\eps^{-2}\Lambda^\theta$,  where $\Lambda^\theta$  is the 
	smallest eigenvalue of the operator $-\Delta^{ \theta}(B_0)$; note that $\Lambda^\theta>0$ if
	$\theta\not=(0,0,\dots,0)$. 	Hence we obtain the estimate
	\begin{gather*}
		\forall \theta\in [0,2\pi)^n:\   \eps^{-2}\Lambda^\theta\leq \lambda_{m+1}(\AA^\theta\e)\leq 
		\sup L_{m+1,\eps}
	\end{gather*}
    (recall that $L_{m+1,\eps}$ is the $(m+1)$th band of $\sigma(\AA\e)$, see \eqref{repres1}),
	whence
	\begin{gather}\label{decoup:est}
		\eps^{-2}\Lambda \leq \sup L_{m+1,\eps},
		\text{ 	where }
		\Lambda:=\max_{\theta\in[0,2\pi)^n}\Lambda^\theta.
	\end{gather}
	Note that $\Lambda$ depends only on the set $B_0$.
	
	From \eqref{decoup:est} and \eqref{repres1} we immediately conclude the following result.
	
	\begin{lemma}\label{lemma:La}
		The spectrum $\sigma(\A\e)$ has at most $m$ gaps within the interval $[0,\Lambda\eps^{-2}]$.
	\end{lemma}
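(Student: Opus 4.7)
The plan is a straightforward counting argument built on the estimate \eqref{decoup:est} and the band decomposition \eqref{repres1}. First I would identify the two ``extremal'' bands among $L_{1,\eps},\dots,L_{m+1,\eps}$. The bottom is easy: for $\theta=0$ the conditions \eqref{quasi} and \eqref{quasi+} reduce to $Y$-periodicity, so the constant function belongs to $\dom(\aa^\theta\e)$ and is an eigenfunction of $\AA^\theta\e$ with eigenvalue $0$; hence $0\in L_{1,\eps}$. The top of the $(m+1)$th band is already controlled by the preceding subsection: \eqref{decoup:est} gives $\sup L_{m+1,\eps}\geq \Lambda\eps^{-2}$. It is also convenient to record the monotonicity $\inf L_{k,\eps}\le\inf L_{k+1,\eps}$, which follows at once from $\lambda_k(\AA^\theta\e)\le\lambda_{k+1}(\AA^\theta\e)$ by taking $\inf$ over $\theta\in[0,2\pi)^n$.

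Next I would invoke the elementary fact that a union of $N$ compact intervals of $\R$ is a disjoint union of at most $N$ compact intervals. Applied to $\bigcup_{k=1}^{m+1}L_{k,\eps}$, this shows that the latter is a disjoint union of at most $m+1$ compact intervals. Since this union contains $0$ (via $L_{1,\eps}$) and meets $[\Lambda\eps^{-2},\infty)$ (via $L_{m+1,\eps}$), its complement inside $[0,\Lambda\eps^{-2}]$ is a disjoint union of at most $m$ open intervals. Finally, the remaining bands $L_{k,\eps}$ with $k\geq m+2$ are subsets of $\sigma(\AA\e)$, so adjoining them to the union can only shrink this complement further and cannot create new components. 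Using \eqref{sigmaAsigmaA}, we conclude that $[0,\Lambda\eps^{-2}]\setminus\sigma(\A\e)$ has at most $m$ connected components, which are exactly the gaps contained in $[0,\Lambda\eps^{-2}]$.

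There is no real obstacle here: the analytic content of the lemma is already packaged in the Neumann-decoupling estimate \eqref{decoup:est} obtained earlier in this subsection, and what remains is essentially topological bookkeeping on the line. The only point one should state carefully is that ``gap'' refers to a maximal nonempty open subinterval of the complement whose endpoints lie in $\sigma(\A\e)$; but once one knows that $0\in\sigma(\A\e)$ and that $L_{m+1,\eps}\cap[0,\Lambda\eps^{-2}]\neq\emptyset$, each connected component of $[0,\Lambda\eps^{-2}]\setminus\sigma(\A\e)$ of the form $(\al,\be)\subset(0,\Lambda\eps^{-2})$ automatically has its endpoints in $\sigma(\A\e)$, so the counting above yields the stated bound of at most $m$ gaps.
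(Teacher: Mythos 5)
Your strategy is the same one the paper compresses into the single sentence ``From \eqref{decoup:est} and \eqref{repres1} we immediately conclude,'' and the bulk of your fleshing-out is correct. There is, however, one step that is not valid as stated: the assertion that adjoining the bands $L_{k,\eps}$ with $k\geq m+2$ ``can only shrink this complement further and cannot create new components.'' Shrinking a set and not increasing its number of connected components are two different things --- adding a closed set to a closed set can very well split a component of the complement into two (replace $[0,1]$ by $[0,1]\cup\{2\}$ and the complement in $\R$ gains a component). So the topological bookkeeping, as you present it, has a gap precisely at its last step.

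The missing ingredient is a second monotonicity, namely that the \emph{upper} band edges are also nondecreasing: by \eqref{enclosure} and the min--max principle, $\sup L_{k,\eps}\leq\sup L_{k+1,\eps}$ for all $k$, so $\sup L_{k,\eps}\geq\sup L_{m+1,\eps}\geq\Lambda\eps^{-2}$ for every $k\geq m+1$. Hence, for $k\geq m+2$, the compact interval $L_{k,\eps}$ either misses $[0,\Lambda\eps^{-2}]$ entirely or contains its right endpoint $\Lambda\eps^{-2}$; adjoining intervals that all abut the right end of $[0,\Lambda\eps^{-2}]$ indeed cannot increase the number of components of the complement there, which repairs your argument. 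An even cleaner route is to note that monotonicity of \emph{both} band edges (from \eqref{enclosure}) shows every gap of $\sigma(\AA\e)$ is exactly an interval $(\sup L_{k,\eps},\inf L_{k+1,\eps})$ for some $k$, and \eqref{decoup:est} forces any such gap with $k\geq m+1$ to lie in $[\Lambda\eps^{-2},\infty)$; only $k\in\{1,\dots,m\}$ remain, giving at most $m$ gaps inside $[0,\Lambda\eps^{-2}]$.
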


	\subsection{Abstract scheme} \label{subsec:3:3}
	
	To describe the behaviour of the eigenvalues of the operators $\AA\e^N$, $\AA\e^D$ and $\AA\e^\theta$ as $\eps\to 0$, we utilize the abstract result
	from \cite{IOS89} (see also \cite{OIS92} for more detailed proofs) concerning the convergence of  eigenvalues of compact self-adjoint operators in varying Hilbert spaces.   
	\smallskip
	
	Let $\HS\e$ and
	$\HS$ be separable Hilbert spaces, and
	$\Res\e $ and 
	$\Res$ be linear
	compact self-adjoint non-negative operators in $\HS\e$ and $\HS$, respectively.
	We denote by
	$\{\mu_{k,\eps}\}_{k\in\N}$ and
	$\left\{\mu_k\right\}_{k\in\N}$ the sequences of eigenvalues of the
	operators $\Res\e$ and $\Res$, respectively,
	being renumbered in the descending order and with account of their
	multiplicity. 
	
	\begin{theorem}[{\cite[Lemma~1]{IOS89}}]
		\label{th:IOS}
		Let the following conditions $(A_1)-(A_4)$ hold:
		\begin{itemize}
			\item[$(A_1)$] There exists linear bounded operator $\J\e \colon \HS\to
			\HS\e$  such that  
			\begin{gather*}
				\forall f\in\HS:\quad \|\J\e
				f\|_{\HS\e} \to
				\|f\|_{\HS}\text{ as }\eps\to 0.
			\end{gather*}

			\item[$(A_2)$] The operators 
			$ \Res\e  $ are bounded
			uniformly in $\eps$.
			
			\item[$(A_3)$] For any $f\in\HS$ one has $$\|\Res\e \J\e
			f-\J\e \Res f\|_{\HS\e}\to 0 \text{ as }\eps\to 0.$$
			
			\item[$(A_4)$] For any family $\{f\e\in \HS\e\}_\eps$ with $\sup_{\eps}
			\|f\e\|_{\HS\e}<\infty$ there exists a subsequence $(f_{\eps_m})_{m\in\N}$ with $\eps_m\to 0$ as $m\to\infty$ and $w\in
			\HS$ such that $$ \|\Res_{\eps_m} f_{\eps_m}-\J_{\eps_m}
			w\|_{\HS_{\eps_m}}\to 0\text{ as }m\to \infty.$$
		\end{itemize}
		Then for any $k\in\mathbb{N}$ we have
		\begin{equation*}
			\mu_{k,\eps}\to \mu_k\text{ as }\eps\to0.
		\end{equation*}
	\end{theorem}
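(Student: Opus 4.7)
The plan is to apply the min-max characterisation of eigenvalues separately in $\HS$ and $\HS\e$, using hypotheses $(A_1)$--$(A_4)$ as a dictionary that transports test subspaces from $\HS$ into $\HS\e$ and, conversely, limits of eigenfunctions from $\HS\e$ back into $\HS$. Recall that for a compact non-negative self-adjoint operator the $k$-th eigenvalue in descending order satisfies
\[
\mu_{k}=\max_{\substack{V\subset\HS\\ \dim V=k}}\min_{v\in V\setminus\{0\}}\frac{(\Res v,v)_{\HS}}{\|v\|_{\HS}^{2}},
\]
and likewise for $\mu\ke$ with $\HS\e$ and $\Res\e$. I would prove $\liminf\mu\ke\ge\mu_k$ and $\limsup\mu\ke\le\mu_k$ independently.

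For the lower bound, fix orthonormal eigenfunctions $w_{1},\dots,w_{k}\in\HS$ of $\Res$ with $\Res w_{i}=\mu_{i}w_{i}$, and test with $V\e=\mathrm{span}\{\J\e w_{1},\dots,\J\e w_{k}\}\subset\HS\e$. Polarising $(A_1)$ (via the identity $\|\J\e(f+g)\|_{\HS\e}^{2}-\|\J\e(f-g)\|_{\HS\e}^{2}=\ldots$) yields $(\J\e w_{i},\J\e w_{j})_{\HS\e}\to\delta_{ij}$, so $\dim V\e=k$ for small $\eps$. For $v=\sum_{i}c_{i}\J\e w_{i}\in V\e$, combining $(A_1)$, $(A_3)$ and $\Res w_{i}=\mu_{i}w_{i}$ gives
\[
\|v\|_{\HS\e}^{2}\longrightarrow\sum_{i}|c_{i}|^{2},\qquad
(\Res\e v,v)_{\HS\e}\longrightarrow\sum_{i}\mu_{i}|c_{i}|^{2},
\]
uniformly in $(c_{1},\dots,c_{k})$ on the unit sphere of $\C^{k}$. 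Min-max then yields $\mu\ke\ge\mu_{k}+o(1)$.

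For the upper bound, pass to a subsequence so that $\mu\je\to\mu_{j}^{*}$ for every $j\le k$, pick $\HS\e$-orthonormal eigenfunctions $f\je$ of $\Res\e$, and iteratively apply $(A_4)$ in $j$ (a diagonal extraction) to produce, along a further subsequence, vectors $w_{j}^{*}\in\HS$ with $\|\Res\e f\je-\J\e w_{j}^{*}\|_{\HS\e}\to 0$. Assume first $\mu_{k}>0$; the lower bound already proved forces $\mu_{j}^{*}\ge\mu_{j}>0$ for $j\le k$, so $g_{j}:=w_{j}^{*}/\mu_{j}^{*}$ is well-defined. From $\Res\e f\je=\mu\je f\je$ and $\mu\je\to\mu_{j}^{*}$ one deduces $\|f\je-\J\e g_{j}\|_{\HS\e}\to 0$. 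The polarised $(A_1)$ combined with $(f\je,f_{l,\eps})_{\HS\e}=\delta_{jl}$ shows $\{g_{j}\}_{j=1}^{k}$ is $\HS$-orthonormal. Then $(A_2)$ and $(A_3)$ give $\|\J\e(\Res g_{j}-\mu_{j}^{*}g_{j})\|_{\HS\e}\to 0$, and $(A_1)$ upgrades this to $\Res g_{j}=\mu_{j}^{*}g_{j}$. Hence $\mu_{1}^{*},\dots,\mu_{k}^{*}$ are eigenvalues of $\Res$ supported on an orthonormal family, and min-max gives $\mu_{k}\ge\mu_{k}^{*}$. The degenerate case $\mu_{k}=0$ is handled by restricting the same construction to indices with $\mu_{j}^{*}>0$: more than $k-1$ such indices would contradict $\mu_{k}=0$, forcing $\mu_{k}^{*}=0$.

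The main difficulty is the upper bound: $(A_4)$ delivers only a very weak form of compactness (convergence of $\Res\e f\je$ to an element in the ``image'' $\J\e\HS$, not of $f\je$ itself), and one must carefully organise an iterated extraction in $j$, keeping track of nested subsequences, then deploy $(A_1)$ and its polarised form to upgrade ``almost orthonormality in $\HS\e$'' to genuine orthonormality in $\HS$ and ``almost eigenvector in $\HS\e$'' to a true eigenvector relation in $\HS$. The remaining steps are essentially bookkeeping, but this transfer between varying spaces is where all four hypotheses interact nontrivially.
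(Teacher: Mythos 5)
Theorem~\ref{th:IOS} is not proved in the paper: it is quoted as a black box from \cite{IOS89}, with the detailed proof appearing in the monograph \cite{OIS92}, so there is no in-paper argument to compare your proposal against. Your blind proof is correct and is essentially the standard argument for spectral convergence of compact self-adjoint operators in varying Hilbert spaces. The two-sided min-max comparison is the right skeleton: for the lower bound, the polarized form of $(A_1)$ together with $(A_3)$ transports the span of eigenvectors of $\Res$ forward through $\J\e$ and controls the Rayleigh quotients uniformly on the unit sphere of $\C^k$; for the upper bound, $(A_4)$ with a diagonal extraction identifies a subsequential limit $w_j^*$ for each normalized eigenfunction $f_{j,\eps}$, and then $(A_1)$--$(A_3)$ upgrade ``almost orthonormal, almost eigenvector in $\HS\e$'' to a true orthonormal eigensystem $\{g_j\}$ of $\Res$ with eigenvalues $\mu_j^*$, after which min-max gives $\mu_k\ge\mu_k^*$ and the lower bound closes the squeeze to $\mu_k^*=\mu_k$ for every subsequential limit. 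Two small points worth making explicit in a write-up: the step $\Res\e(f_{j,\eps}-\J\e g_j)\to 0$ uses the uniform bound $(A_2)$, and the step $\|\J\e w_j^*/\mu_{j,\eps}-\J\e w_j^*/\mu_j^*\|_{\HS\e}\to 0$ needs the boundedness of $\|\J\e w_j^*\|_{\HS\e}$, which comes from $(A_1)$ forcing it to converge to $\|w_j^*\|_{\HS}$. Your treatment of the degenerate case $\mu_k=0$ is also correct. If anything, the only further care needed is the finite-dimensional variant recorded in Remark~\ref{rem:IOS}, which the paper actually uses; your upper-bound construction stops producing nonzero eigenvectors once $\mu_j^*$ hits $0$, which is exactly what that remark records.
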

	
	\begin{remark}\label{rem:IOS}
		The above result was established under the assumption  $\dim \HS=\dim \HS\e=\infty$. Tracing its proof in \cite{IOS89,OIS92} one can easily see  that for $\dim\HS<\infty$ 
		and $\Res$ being a self-adjoint operator in $\HS$ with 
		$$ \sigma(\Res)=\{\mu_1\geq\mu_2\geq\dots\geq \mu_{\dim(\HS)}>0\},
		$$ 
		the result reads as follows:   
		$$\text{conditions }(A_1)\text{-}(A_4)\text{ imply }\lim_{\eps\to 0  }\mu_{k,\eps}=
		\mu_k\text{ for }k\in\{1,\dots,\,\dim \HS\}.$$
	\end{remark}

	\subsection{Asymptotic behavior of Neumann and periodic eigenvalue problems}
	\label{subsec:3:4}
	
	One has:
	\begin{gather}\label{lambda1N}
		\lambda_1(\AA^N\e)=0.
	\end{gather}
	For the next eigenvalues one has the following convergence result.
	
	\begin{lemma}
		\label{lemmaN}
		For any $k\in\{2,\dots,m+1\}$ one has
		\begin{gather}\label{estN}
			\lambda_{k}(\AA^N\e)\to \beta_{k-1},\ \eps\to 0.
		\end{gather}
	\end{lemma}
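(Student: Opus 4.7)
I would apply the abstract scheme of Theorem~\ref{th:IOS} (in the finite-dimensional variant of Remark~\ref{rem:IOS}) to the resolvents $\Res\e\ceq (\AA^N\e+\Id)^{-1}$ on $\HS\e\ceq\L(Y\e)$, with limiting Hilbert space $\HS\ceq\C^{m+1}$ equipped with the weighted inner product
\begin{gather*}
(v,w)_{\HS}=|B_0| v_0\overline{w_0}+\sum_{j\in\M}|B_j|v_j\overline{w_j}.
\end{gather*}
On $\HS$ define the operator $\AA^{\hom}$ via the sesquilinear form
\begin{gather*}
\aa^{\hom}[v,v]=\sum_{j\in\M}\frac{\eta_j^{n-1}|D_j|}{h_j}|v_j-v_0|^2,
\end{gather*}
and set $\Res\ceq (\AA^{\hom}+\Id)^{-1}$. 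A direct calculation (setting $\gamma_j=\eta_j^{n-1}|D_j|/h_j$ and eliminating $v_j=\al_j v_0/(\al_j-\lambda)$ from the eigenvalue equations) shows that $\AA^{\hom}$ has $m+1$ eigenvalues $0<\be_1<\dots<\be_m$, with $\be_j$ being exactly the roots of $F(\lambda)=0$ described in \eqref{mu_eq}--\eqref{inter}; the eigenvalue $0$ corresponds to the constant vector $(1,1,\dots,1)$.

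\textbf{Interpolation operator.} The map $\J\e\colon \HS\to\HS\e$ is defined by
\begin{gather*}
(\J\e v)(x)=\begin{cases} v_0 & x\in B_0,\\ v_j & x\in B_j,\\ v_0+(v_j-v_0)(x^n-z_j^n+h_j/2)/h_j & x\in T\je,\end{cases}
\end{gather*}
i.e.\ the constants $v_0, v_j$ joined by an axial linear interpolation in each passage. Condition $(A_1)$ follows from $|T\je|=\mathcal{O}(\eps^2)\to 0$, and $(A_2)$ from $\|\Res\e\|\le 1$.

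\textbf{Key step: verification of $(A_3)$.} Given $f\in\HS$, let $u\e\ceq\Res\e\J\e f$ and $v\ceq\Res f$. Writing $u\e=\J\e v+r\e$ and using $u\e$ as a test function in the weak formulation of $(\AA^N\e+\Id)u\e=\J\e f$, one obtains the a priori bound $\eps^{-2}\|\nabla u\e\|^2+\|u\e\|^2\le C$. Applying a Poincar\'e-type inequality on each $B_j$ and on $B_0$ (with Neumann conditions), $u\e$ is shown to be $L^2$-close to some constants $v_0\e,\dots, v_m\e$. The critical scaling $\eta\je=\eta_j\eps^{2/(n-1)}$ is chosen precisely so that the Dirichlet energy across the passage $T\je$ of the linear interpolant between $v_0\e$ and $v_j\e$, rescaled by $\eps^{-2}$, tends to $\gamma_j|v_j\e-v_0\e|^2$. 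Passing to the limit (after extracting a subsequence so that $v\e\ceq(v_0\e,\dots,v_m\e)\to v^*\in\HS$) in the variational formulation tested against $\J\e w$ for arbitrary $w\in\HS$ yields $(\AA^{\hom}+\Id)v^*=f$, so $v^*=v$ by uniqueness, and $\|u\e-\J\e v\|_{\L(Y\e)}\to 0$. Condition $(A_4)$ follows from the same a priori estimate together with the Rellich--Kondrachov theorem applied on $B_0\cup(\cup_j B_j)$, since these domains are $\eps$-independent Lipschitz sets.

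\textbf{Main obstacle and conclusion.} The main technical point is the quantitative matching of the passage energy with its homogenised counterpart: one must control the difference between $u\e|_{T\je}$ and its axial linear interpolant, and verify that the $L^2$-norm of $u\e$ on $T\je$ is negligible (since $|T\je|=\mathcal O(\eps^2)$), whereas its contribution to the Dirichlet form survives after the $\eps^{-2}$ rescaling. This uses trace/Poincar\'e inequalities on the rescaled cross-section $\eta\je D_j$ together with elementary one-dimensional interpolation estimates along the axis of $T\je$. Once $(A_1)$--$(A_4)$ are verified, Remark~\ref{rem:IOS} gives $\mu_{k,\eps}\to\mu_k$ for $k\in\{1,\dots,m+1\}$; translating $\mu_{k,\eps}=(1+\lambda_k(\AA^N\e))^{-1}$ back to the original eigenvalues yields $\lambda_1(\AA^N\e)\to 0$ (consistent with \eqref{lambda1N}) and $\lambda_k(\AA^N\e)\to\be_{k-1}$ for $k\in\{2,\dots,m+1\}$, which is \eqref{estN}.
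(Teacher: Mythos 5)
Your proposal follows essentially the same route as the paper: you construct the same $(m+1)$-dimensional weighted space $\HS^N$ and the same limit operator (since $\al_j|B_j|=\eta_j^{n-1}|D_j|/h_j$, your form $\aa^{\hom}$ coincides with the paper's $\aa^N$), and you apply the abstract scheme via resolvents, verifying conditions $(A_1)$--$(A_4)$ by a priori estimates and trace/Poincar\'e inequalities on the shrinking passages. The approach is correct.

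One cosmetic remark: the only meaningful deviation is your choice of $\J\e$, which uses a linear interpolant on $T\je$ rather than the zero extension used in the paper. Both choices satisfy $(A_1)$ since $|T\je|=\mathcal{O}(\eps^2)$, but note that as written your interpolant has the wrong orientation: with
\begin{equation*}
(\J\e v)(x)=v_0+(v_j-v_0)\,\frac{x^n-z_j^n+h_j/2}{h_j}\qquad\text{on }T\je ,
\end{equation*}
one gets $\J\e v=v_0$ at $x^n=z_j^n-h_j/2$ (the face on $\partial B_j$, where it should equal $v_j$) and $\J\e v=v_j$ at $x^n=z_j^n+h_j/2$ (the face on $\partial F_j\subset\partial B_0$, where it should equal $v_0$). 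Swapping $v_0\leftrightarrow v_j$ in that formula restores continuity across the two ends of the passage, which you need if $\J\e w$ is to serve as an $\H^1$ test function in the weak formulation; with that fix the argument matches the paper's, which uses a separate such test function while keeping $\J\e^N$ as the simpler zero extension.
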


	\begin{proof}
		Let $\HS^N$ be the  space $\C^{m+1}$ equipped with the weighted scalar product,
		\begin{gather}
			\label{scalar-product}
			(\mathbf{u},\mathbf{v})_{\HS^N}=\suml_{j\in\M_0} u_j \overline{v_j} |B_j|
		\end{gather}
		(recall that the notations $\M$ and $\M_0$ are defined in \eqref{MM}).
		Hereinafter the elements of $\HS^N$ are denoted by bold letters, and their entries are enumerated starting from zero:
		$$
		\mathbf{u}\in \HS^N\ \Rightarrow\ \mathbf{u}=(u_0,\dots,u_m)\text{ with }u_j\in\C.
		$$
		In this space we introduce the sesquilinear form $\aa^N$ via
		$$
		\aa^N[\mathbf{u},\mathbf{v}]=\suml_{j\in\M} \al_j |B_j| (u_j-u_0)\overline{(v_j-v_0)},\quad
		\dom(\aa^N)=\HS^N.
		$$
		Let $\AA^N$ be the operator in $\HS^N$ associated with this form. It is represented by the $(m+1)\times(m+1)$  symmetric (with respect to the scalar product \eqref{scalar-product}) matrix 
		\begin{gather}\label{AN}
			\AA^N=\left(\begin{matrix}\ds\suml_{k=1}^m \al_j |B_j||B_0|^{-1}&-\al_1|B_1||B_0|^{-1}  &-\al_2|B_2||B_0|^{-1}&\dots&-\al_m|B_m||B_0|^{-1} \\-\al_1 &\al_1 &0&\dots&0\\
				-\al_2 &0&\al_2 &\dots&0\\
				\vdots&\vdots&\vdots&\ddots&\vdots\\
				-\al_m  &0&0&\dots&\al_m 
			\end{matrix}\right).
		\end{gather}
		The matrix of the form \eqref{AN} was investigated in \cite{BK15}. It was shown that its eigenvalues 
		$$\lambda_{1}(\AA^N )\leq\lambda_{2}(\AA^N )\leq\dots\leq \lambda_{m+1}(\AA^N )$$
		are the zeros of the function $\lambda F(\lambda)$, where $F(\lambda)$ is defined by \eqref{mu_eq}. Thus, one has
		\begin{gather}\label{lambdaN}
			\lambda_{1}(\AA^N )=0,\quad 
			\lambda_{k}(\AA^N )=\be_{k-1},\ k=2,\dots,m+1.
		\end{gather}	
		Our goal is to show that for $k=1,\dots,m+1$ one has
		\begin{gather}
			\label{conv:lambdaN}
			\lambda_k(\AA^N\e)\to \lambda_k(\AA^N)\text{ as }\eps\to 0.
		\end{gather}
		Then the desired convergence result \eqref{estN}  follows immediately from \eqref{lambdaN}--\eqref{conv:lambdaN}.

		To prove \eqref{conv:lambdaN} we utilize  abstract Theorem~\ref{th:IOS}.	
		We denote 
		$
		\HS\e\ceq \L(Y\e)
		$ and introduce the   operators $\Res^N\e$ and $\Res^N$ acting in $\HS\e$
		and $\HS^N$, respectively:
		$$\Res^N\e\ceq (\AA\e^N+\Id)^{-1},\quad \Res^N \ceq (\AA^N+\Id)^{-1}$$
		(hereinafter $\Id$ stands for an identity operator).  
		The operators $\Res^N\e$, $\Res^N$ are compact, non-negative, moreover, one has
		\begin{gather}\label{A2}
			\|\Res^N\e\|\leq 1.
		\end{gather} 
		We denote by
		$\{\mu_{k,\eps}^N\}_{k\in\N}$  the set of the eigenvalues of $\Res^N\e$ being renumbered in the descending order and with account of their
		multiplicity. By virtue of spectral mapping theorem one has
		\begin{gather}\label{SMT1}
			\mu\ke^N=(\lambda_k(\AA\e^N)+\Id)^{-1},\ k\in\N.
		\end{gather}
		Similarly, we have
		\begin{gather}\label{SMT2}
			\mu_k^N=(\lambda_k(\AA^N)+\Id)^{-1},\ k=1,\dots,m+1,
		\end{gather}
		where $1=\mu_1^N\geq \mu_2^N\ge\dots\ge \mu_{m+1}^N>0$ are the eigenvalues of the operator  $\Res^N$. 
		Finally, we introduce  the linear operator  $\J\e^N:  \HS^N\to \HS\e$ acting on  
		$\mathbf{f}=(f_0,\dots,f_m)$
		as follows,
		$$
		(\J\e^N\mathbf{f})(x)=
		\begin{cases}
			f_j,&x\in B_j,\ j\in\M_0,
			\\
			0,
			&x\in T_{j,\eps},\ j\in\M.
		\end{cases}
		$$
		It is easy to see that for each $\mathbf{f}\in\HS^N $ one has
		\begin{gather}\label{A1}
			\|\J\e^N \mathbf{f}\|_{\HS\e}=\|\mathbf{f}\|_{\HS^N}.
		\end{gather}
		
		Below we demonstrate that the operators $\Res\e^N$, $\Res^N$ and $\J\e^N$
		satisfy the conditions 
		 $(A_3)$ and $(A_4)$
		of Theorem~\ref{th:IOS} (the other two conditions $(A_1)$ and $(A_2)$ are fulfilled due to \eqref{A1} and \eqref{A2}); then, by virtue of Theorem~\ref{th:IOS} and Remark~\ref{rem:IOS}
		we get
		$$\mu_{k,\eps}^N\to \mu_k^N\text{ as }\eps\to 0$$
		for $k=1,\dots,m+1$, whence, owing to \eqref{SMT1}--\eqref{SMT2}, the desired convergence   \eqref{conv:lambdaN} follows.\smallskip

		Let us check the fulfillment of  $(A_3)$.
		Let $\mathbf{f}\in\HS^N$. We denote 
		$f\e\ceq \J\e^N \mathbf{f}$ and 
		\begin{gather}
			\label{ue:def0}
			u\e\ceq \Res^N\e f\e. 
		\end{gather}
		Then $\AA\e^N u\e + u\e = f\e$, whence,
		$u\e\in  \H^1(Y\e)$ and
		\begin{gather}\label{weak:main}
			\aa\e^N [u\e,v\e]+(u\e,v\e)_{\HS\e}=(f\e,v\e)_{\HS\e},\ \forall v\e\in\H^1(Y\e).
		\end{gather}
		The equality \eqref{weak:main} implies easily the estimate
		\begin{gather}\label{apriori:N}
			\eps^{-2}\|\nabla u\e\|_{\L(Y\e)}^2+\|u\e\|_{\L(Y\e)}^2\leq \|f\e\|_{\L(Y\e)}^2=
			\|\mathbf{f}\|_{\HS^N}^2.
		\end{gather}
		In particular, it follows from \eqref{apriori:N} that the norms $\|u\e\|_{\H^1(Y\e)}$ are uniformly
		bounded with respect to  $\eps\in (0,1]$. Hence, by virtue of Banach–Alaoglu and Rellich–Kondrachov theorems,
		there exists a sequence $(\eps_m)_{m\in\N}$ with $\eps_m\searrow 0$ as $m\to\infty$ and 
		$u_j\in \H^1(B_j)$, $j\in\M_0$ such that
		\begin{gather}\label{weak}
			\nabla u_{\eps_m}\to \nabla u_j\text{ weakly in }\L(B_j),
			\\\label{strong}
			u_{\eps_m}\to   u_j\text{ strongly in }\L(B_j),
		\end{gather}
		as $m\to \infty$. 
		Furthermore, using \eqref{apriori:N}, \eqref{weak}, we get  
		$$
		\|\nabla u_j\|^2_{\L(B_j)}\leq 
		\liminf_{m\to\infty}\|\nabla u_{\eps_m}\|_{\L(B_j)}\leq 
		\lim_{m\to\infty}(\eps_m)^2\|\mathbf{f}\|_{\HS^N}=0,$$ 
		whence $u_j$ are constant functions, and we can regard
		$\mathbf{u}=(u_0,\dots,u_{m})$ as the element of $\HS^N$.
		Note that \eqref{strong} implies
		\begin{gather}\label{strong:mean}
			\lim_{m\to\infty}\la u_{\eps_m}\ra_{B_j}= \la u_j\ra_{B_j}=u_j,\ j\in\M_0.
		\end{gather}
		where by $\langle u \rangle_B$ we denote the mean value of the function
		$u(x)$ over the domain $B$, i.e. $$\langle u \rangle_B=\frac{1}{
			|B|}\int_B u(x)\d x.$$
		The same notation will be used further (see \eqref{LHS1}) for the mean value of a function defined on a subset $S$ of an $(n-1)$-dimensional hyperplane, 
		i.e $$\langle
		u\rangle_S=\frac{1}{|S|}\int_{S}u \d s,\quad |S|=\int_{S}\d s,$$
		where  $\d s$ stands for the density of the surface measure on $S$.
		
		Let $\mathbf{v}=(v_0 ,\dots,v_m)\in\HS^N$. We define the function
		$v\e\in\H^1(Y\e)$ via
		\begin{gather*}
			v\e(x)\ceq
			\begin{cases}
				v_j,&x\in B_j,\ j\in\M_0,\\
				\frac{v_0-v_j}{h_j} (x^n-z_j^n) +\frac{v_0+v_j}{2},&x\in T\je,\ j\in\M
			\end{cases}
		\end{gather*}
		(recall that $z_j=(z_j',z_j^n)\in\R^n$ is a point around which we built the passage $T\je$, see \eqref{Tke}).
		Inserting this $v\e$ into \eqref{weak:main}, we arrive at
		the equality
		\begin{gather}\label{weak:ve}
			\eps^{-2}\sum_{j\in\M}\left(\int_{T\je}\frac{\partial u\e}{  \partial x^n}\d x\right) \frac{\overline{v_0 -v_j}}{   h_j}+
			\sum_{j\in\M_0}\la u\e \ra_{B_j} \overline{v_j}|B_j|+\sum_{j\in\M}(u\e, v\e)_{\L(T\je)}=
			(\mathbf{f},\mathbf{v})_{\HS^N}.
		\end{gather}

		Using \eqref{strong:mean} we get  
		\begin{gather}\label{LHS2final}
			\lim_{m\to\infty}\sum_{j\in\M_0}\la u_{\eps_m} \ra_{B_j} \overline{v_j}|B_j|=
			\sum_{j\in\M_0} u_j \overline{v_j}|B_j|=(\mathbf{u},\mathbf{v})_{\HS^N},
		\end{gather}
		Further, one has 
		\begin{gather*} 
			|(u\e, v\e)_{\L(T\je)}|\leq \|u\e\|_{\L(T\je)} \|v\e\|_{\L(T\je)}
			\leq \|u\e\|_{\L(Y\e)}\max\{|v_j|;\,|v_0|\}|T\je|^{1/2},
		\end{gather*}
		whence, taking into account that $\|u\e\|_{\L(Y\e)}\leq  \|f\|_{\HS^N} $ and $|T\je|\to 0$, we conclude
		\begin{gather}\label{LHS3final}
			\lim_{\eps\to 0}\sum_{j\in\M}(u\e, v\e)_{\L(T\je)}=0.
		\end{gather}
		Now, let us inspect the first term in the left-hand-side of \eqref{weak:ve}. 
		We denote by $S\je^\pm$ the top and bottom faces of the passage $T\je$, i.e.,
		$$
		S\je^\pm\ceq \left\{x=(x',x^n)\in\R^n:\ x^n-z_j^n= \pm h_j/2,\ x'-z'_j\in \eta\je D_j\right\}.
		$$ 
		Then, integrating by parts, we obtain:
		\begin{align}\notag
			\eps^{-2}\sum_{j\in\M}\left(\int_{T\je}\frac{\partial u\e}{  \partial x^n}\d x\right)\frac{\overline{v_0 -v_j}}{   h_j}
			&=
			\eps^{-2}\sum_{j\in\M}\left(\int_{S\je^+} u\d s -  \int_{S\je^-} u\d s\right)
			\frac{\overline{v_0-v_j}}{   h_j}\\
			&=
			\sum_{j\in\M}\al_j|B_j|\left(\la u\e\ra_{S\je^+}  -  \la u\e\ra_{S\je^-}  \right)
			(\overline{v_0-v_j} )\label{LHS1}
		\end{align} 
		(on the last step we use $|S\je^\pm|=(\eta\je)^{n-1}|D_j|=(\eta_j)^{n-1}\eps^2|D_j|=\al_j|B_j|h_j\eps^2$, see \eqref{etaje} and \eqref{Ak}).
		One has the following estimates for $j\in\M$:
		\begin{gather}\label{meanest1}
			|\la u\e\ra_{S\je^+}-\la u\e\ra_{B_0}|^2\leq C\|\nabla u\e\|^2_{\L(B_0)}\cdot
			\begin{cases}
				(\eta\ke)^{2-n},&n\ge 3,\\
				|\ln \eta\ke|,&n=2,
			\end{cases}
			\\\label{meanest2}
			|\la u\e\ra_{S\je^-}-\la u\e\ra_{B_j}|^2\leq C\|\nabla u\e\|^2_{\L(B_j)}\cdot
			\begin{cases}
				(\eta\ke)^{2-n},&n\ge 3,\\
				|\ln \eta\ke|,&n=2.
			\end{cases}
		\end{gather}
		The proof is similar to the proof of \cite[Lemma~2.1]{Kh13a}.
		From \eqref{etaje}, \eqref{apriori:N}, \eqref{meanest1}, \eqref{meanest2} we get
		\begin{gather}\label{meanest}
			|\la u\e\ra_{S\je^+}-\la u\e\ra_{B_0}|^2 + |\la u\e\ra_{S\je^-}-\la u\e\ra_{B_j}|^2=\left.
			\begin{cases} 
				\mathcal{O}(\eps^{2/ (n-1)}),&n\ge 3 \\
				\mathcal{O}(\eps^2|\ln\eps |),&n=2 
			\end{cases}	\right\}\to 0\text{ as }\eps\to 0.
		\end{gather}
		Using \eqref{strong:mean} and \eqref{meanest},
		we finally conclude from \eqref{LHS1}:
		\begin{gather}\label{LHS1final}
			\lim_{m\to\infty}(\eps_m)^{-2}\sum_{j\in\M}\left(\int_{T_{j,\eps_m}}\frac{\partial u_{\eps_m}}{  \partial x^n}\d x\right)\frac{\overline{v_0-v_j}}{   h_j}=
			\sum_{j\in\M}{\al_j|B_j|} (u_0-u_j)(\overline{v_0-v_j})=
			\aa^N[\mathbf{u},\mathbf{v}].
		\end{gather}
		Combining \eqref{weak:ve}, \eqref{LHS2final}, \eqref{LHS3final}, \eqref{LHS1final} we 
		arrive at the equality
		\begin{gather*}
			\aa^N[\mathbf{u},\mathbf{v}]+(\mathbf{u},\mathbf{v})_{\HS^N}=(\mathbf{f},\mathbf{v})_{\HS^N},\
			\forall v\in{\HS^N},
		\end{gather*}
		whence we  get
		\begin{gather}\label{u=Resf}
			\mathbf{u}=\Res^N \mathbf{f}.
		\end{gather} 
		The limiting vector $\mathbf{u}$  is independent of the  sequence $u_{\eps_m}$ satisfying \eqref{weak}--\eqref{strong} ($\mathbf{u}$ is defined in a unique way by \eqref{u=Resf}), whence
		we conclude that the whole family $u\e$ converges to $\mathbf{u}$:
		\begin{gather}\label{strong:whole}
			u_{\eps}\to   u_j\text{ strongly in }\L(B_j)\text{ as }\eps\to 0,\ j\in \M_0.
		\end{gather}
		
   	{ 
		Finally, using \eqref{ue:def0}, \eqref{u=Resf} and the definition of the operator $\J\e^N$, we get
		\begin{gather}\label{A3:detailed}
		\|\Res\e^N \J\e^N
		\mathbf{f}-\J\e^N \Res^N \mathbf{f}\|_{\HS\e}^2=
		\sum_{j\in\M_0}\|u\e - u_j\|^2_{\L(B_j)}+\sum_{j\in\M}\|u\e\|^2_{\L(T\je)}.
		\end{gather}
		Due to \eqref{strong:whole} the first term in the right-hand-side of \eqref{A3:detailed}
		tends to zero as $\eps\to 0$. Furthermore, one has the following estimate:
		\begin{gather}\label{T:est}\hspace{-3mm}
		\forall u\in \H^1(T\je\cup B_0):\
		\|u\|^2_{\L(T\je)}\leq
	 C\left(\eta\je^{n-1}\|u\|_{\L(B_0)}^2+\eta\je\kappa\je\|\nabla u\|^2_{\L(B_0)}+\|\nabla u\|^2_{\L(T\je)}\right),
		\end{gather}
		where $\kappa\je\ceq 1$ as $n\ge 3$ and $\kappa\je\ceq |\ln \eta\je|$ as $n=2$.
		The proof of \eqref{T:est} is similar to the proof of inequality (5.16) in \cite{CK17}.
		It follows from \eqref{etaje}, \eqref{apriori:N}, \eqref{T:est} that 
		\begin{gather}\label{T:est:final}
		\|u\e\|^2_{\L(T\je)}\leq 
		C\left(\eta\je^{n-1}\|u\e\|_{\L(Y\e)}^2+\|\nabla u\e\|^2_{\L(Y\e)}\right)\leq
		C_1\eps^2\|\mathbf{f}\|_{\HS^N}^2.
		\end{gather}
	Thus the second term in the right-hand-side of \eqref{A3:detailed}
	goes to zero too; consequently, condition $(A_3)$ is fulfilled.
	}

		\smallskip
		Finally, we check the fulfillment of  $(A_4)$.
		Let $f\e\in\HS\e$ with $\|f\e\|_{\HS\e}\leq C$. We set
		\begin{gather}\label{ue:def}
			u\e\ceq \Res^N\e f\e.
		\end{gather}
		The function $u\e$ belongs to $\H^1(Y\e)$, it satisfies \eqref{weak:main} and 
		the estimate \eqref{apriori:N} holds true. From this estimate  we conclude
		that 
		there exist a sequence $(\eps_m)_{m\in\N}$ with $\eps_m\searrow 0$ as $m\to\infty$ and 
		$w_j\in \H^1(B_j)$, $j\in\M_0$ such that
		\begin{gather}\label{strong+}
			u_{\eps_m}\to   w_j\text{ strongly in }\L(B_j) 
		\end{gather}
		as $m\to \infty$; furthermore, the functions $w_j$ are constants (so, we can regard
		$\mathbf{w}=(w_0,\dots,w_{m})$ as the element of ${\HS^N}$). {
		Also, similarly to \eqref{T:est:final}, we get
		\begin{gather}
		\label{T:est:final+}
		\|u\e\|^2_{\L(T\je)}\leq 
		C\eps^2\|f\e\|_{\HS\e}^2\to0\text{ as }\eps\to 0.	
		\end{gather}
		It follows from \eqref{ue:def}--\eqref{T:est:final+}  and the definition of the operator $\J\e^N$
		that
		$$
		\|\Res_{\eps_m}^N f_{\eps_m}-\J_{\eps_m}^N \mathbf{w}\|_{\HS_{\eps_m}}\to 0\text{ as }m\to\infty.
		$$
		Hence condition $(A_4)$ is also fulfilled.}
		This completes the proof Lemma~\ref{lemmaN}.
	\end{proof}

	Similar result holds for the eigenvalues of the operator 
	$\AA^\theta\e$ with $\theta=(0,0,\dots,0)$, which corresponds to the periodic conditions
	on $\partial Y$.
	One has:
	\begin{gather}\label{lambda1per}
		\lambda_1(\AA^\theta\e)=0\text{ if }\theta=(0,0,\dots,0),
	\end{gather}
	while for the next eigenvalues one has the following lemma.
	
	\begin{lemma}
		\label{lemmaper}
		Let $\theta=(0,0,\dots,0)$. Then
		for any $k\in\{2,\dots,m+1\}$ one has
		\begin{gather*} 
			\lambda_{k}(\AA^\theta\e)\to \beta_{k-1},\ \eps\to 0.
		\end{gather*}
	\end{lemma}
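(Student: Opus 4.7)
The plan is to follow the exact same scheme as in the proof of Lemma~\ref{lemmaN}. I will apply Theorem~\ref{th:IOS} with the same limiting Hilbert space $\HS^N = \C^{m+1}$, the same matrix operator $\AA^N$ from \eqref{AN} (whose eigenvalues $0, \be_1, \dots, \be_m$ are listed in \eqref{lambdaN}), and the same identification map $\J\e^N$. The only structural change is that the resolvent under consideration is now $\Res\e^\theta \ceq (\AA\e^\theta + \Id)^{-1}$ with $\theta = (0,\dots,0)$: the underlying form is still $\aa\e^\theta[u,v] = \eps^{-2}\int_{Y\e} \nabla u \cdot \overline{\nabla v}\,\d x$, but its domain $\H^{1,0}(Y\e)$ now carries the $\eps$-independent periodic boundary conditions on $\partial Y$.

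Conditions $(A_1)$ and $(A_2)$ of Theorem~\ref{th:IOS} are verified exactly as in the Neumann case. For $(A_3)$, I will set $f\e \ceq \J\e^N \mathbf{f}$ and $u\e \ceq \Res\e^\theta f\e$; the function $u\e$ lies in $\H^{1,0}(Y\e)$ and solves the weak identity
$$\aa\e^\theta[u\e,v\e] + (u\e,v\e)_{\HS\e} = (f\e,v\e)_{\HS\e}, \quad \forall v\e \in \H^{1,0}(Y\e).$$
Testing with $v\e = u\e$ reproduces the a priori bound \eqref{apriori:N}, which allows the same weak/strong subsequential extraction of a limit $\mathbf{u}\in\HS^N$. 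The one point that must be checked is that the piecewise-defined test function $v\e$ used to derive \eqref{weak:ve} (equal to $v_j$ on each $B_j$ and linearly interpolated through $T\je$) belongs to the new domain $\H^{1,0}(Y\e)$. But this is automatic: since $v\e$ equals the constant $v_0$ throughout $B_0 \supset \partial Y$, it trivially satisfies the periodic conditions. Consequently all subsequent computations leading to \eqref{LHS1final}--\eqref{LHS3final} go through verbatim, identifying $\mathbf{u}$ with $\Res^N \mathbf{f}$. The verification of $(A_4)$ is completely analogous, modulo the same observation on admissibility of test functions.

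Invoking Theorem~\ref{th:IOS} and Remark~\ref{rem:IOS} together with \eqref{SMT1}--\eqref{SMT2} then yields $\lambda_k(\AA\e^\theta) \to \lambda_k(\AA^N) = \be_{k-1}$ for $k = 2, \dots, m+1$, as claimed. The main (and essentially only) subtle point I expect is the admissibility of the constructed test function in $\H^{1,0}(Y\e)$, which is immediate from $v\e$ being constant on a neighborhood of $\partial Y$ inside $B_0$; no new analysis beyond what was done for Lemma~\ref{lemmaN} is needed.
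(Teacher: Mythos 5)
Your proposal is correct and follows essentially the same route as the paper, which disposes of Lemma~\ref{lemmaper} by remarking that the proof of Lemma~\ref{lemmaN} carries over verbatim. Your observation that the piecewise test function $v\e$ lies in $\H^{1,0}(Y\e)$ because it is constant on $B_0\supset\partial Y$, combined with the paper's complementary remark that periodicity for $\theta=(0,\dots,0)$ places no constraint on the limiting constant $u_0$, is exactly the (minor) checking needed to transfer the Neumann argument to the periodic setting.
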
	
	
	The proof of Lemma~\ref{lemmaper} repeats verbatim the proof of Lemma~\ref{lemmaN}.
	Note that the boundary conditions \eqref{quasi} imply no 
	restrictions on the limiting constant $u_0$ (see \eqref{weak}--\eqref{strong}), 
	since any constant function satisfies \eqref{quasi} if $\theta=(0,0,\dots,0)$.

	\subsection{Asymptotic behavior of Dirichlet and antiperiodic eigenvalue problems} 
	\label{subsec:3:5}
	
	\begin{lemma}
		\label{lemmaD}
		For any $k\in\{1,\dots,m\}$ one has
		\begin{gather}\label{estD}
			\lambda_{k}(\AA^D\e)\to \al_k,\ \eps\to 0.
		\end{gather}
	\end{lemma}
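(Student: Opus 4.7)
The plan is to mirror the proof of Lemma \ref{lemmaN} within the abstract framework of Theorem \ref{th:IOS}, with one decisive change in the limit space that reflects the Dirichlet condition on $\partial Y$. I would take the finite-dimensional Hilbert space $\HS^D\ceq\C^m$ equipped with the weighted scalar product $(\mathbf{u},\mathbf{v})_{\HS^D}\ceq\sum_{j\in\M}u_j\overline{v_j}|B_j|$ and the sesquilinear form $\aa^D[\mathbf{u},\mathbf{v}]\ceq\sum_{j\in\M}\al_j|B_j|u_j\overline{v_j}$ on $\HS^D$; the associated self-adjoint operator $\AA^D$ is simply the diagonal matrix $\mathrm{diag}(\al_1,\dots,\al_m)$, whose eigenvalues are exactly $\al_1,\dots,\al_m$. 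Setting $\Res\e^D\ceq(\AA\e^D+\Id)^{-1}$ and $\Res^D\ceq(\AA^D+\Id)^{-1}$, the spectral mapping theorem together with Theorem \ref{th:IOS} and Remark \ref{rem:IOS} will reduce \eqref{estD} to the convergence $\mu_{k,\eps}^D\to\mu_k^D=(\al_k+1)^{-1}$ for $k=1,\dots,m$.

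As the identification operator I would take $\J\e^D\colon\HS^D\to\HS\e\ceq\L(Y\e)$ acting by $(\J\e^D\mathbf{f})(x)=f_j$ on $B_j$ for $j\in\M$ and $(\J\e^D\mathbf{f})(x)=0$ on $B_0\cup\bigcup_{j\in\M}T\je$. Then $(A_1)$ holds as the equality $\|\J\e^D\mathbf{f}\|_{\HS\e}=\|\mathbf{f}\|_{\HS^D}$, and $(A_2)$ is the trivial bound $\|\Res\e^D\|\leq 1$. To check $(A_3)$, given $\mathbf{f}\in\HS^D$ set $u\e\ceq\Res\e^D\J\e^D\mathbf{f}$; the a priori estimate $\eps^{-2}\|\nabla u\e\|^2_{\L(Y\e)}+\|u\e\|^2_{\L(Y\e)}\leq\|\mathbf{f}\|^2_{\HS^D}$ combined with Rellich--Kondrachov yields, along a subsequence, $u\e\to u_j$ strongly in $\L(B_j)$ with each $u_j$ constant, $j\in\M_0$. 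The new ingredient compared to Lemma \ref{lemmaN} is that $u\e$ vanishes on $\partial Y$, so the Poincar\'e inequality on the $\eps$-independent Lipschitz domain $B_0$ yields $\|u\e\|^2_{\L(B_0)}\leq C\|\nabla u\e\|^2_{\L(B_0)}=\mathcal{O}(\eps^2)$, forcing the limit constant $u_0$ to be zero.

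To identify the limit I would test the Dirichlet analogue of \eqref{weak:main} against
\begin{equation*}
v\e(x)\ceq\begin{cases}0,&x\in B_0,\\ v_j,&x\in B_j,\ j\in\M,\\ -\frac{v_j}{h_j}(x^n-z_j^n)+\frac{v_j}{2},&x\in T\je,\ j\in\M,\end{cases}
\end{equation*}
for arbitrary $\mathbf{v}=(v_1,\dots,v_m)\in\HS^D$; this $v\e$ lies in $\dom(\aa\e^D)$ since it vanishes on $\partial Y$. Integrating by parts over each $T\je$ exactly as in \eqref{LHS1}, and using the trace estimates \eqref{meanest1}--\eqref{meanest} together with $\la u\e\ra_{B_0}\to u_0=0$ and $\la u\e\ra_{B_j}\to u_j$, one obtains in the limit
\begin{equation*}
\aa^D[\mathbf{u},\mathbf{v}]+(\mathbf{u},\mathbf{v})_{\HS^D}=(\mathbf{f},\mathbf{v})_{\HS^D},
\end{equation*}
whence $\mathbf{u}=\Res^D\mathbf{f}$; combined with the passage bound \eqref{T:est:final} this furnishes $(A_3)$. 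Condition $(A_4)$ is then obtained by rerunning the same compactness argument on an arbitrary bounded family $f\e\in\HS\e$ and defining $\mathbf{w}\in\HS^D$ as the resulting limit vector.

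The main obstacle is the bookkeeping forced by the Dirichlet constraint: it suppresses the $u_0$ degree of freedom present in the Neumann case, which is precisely what collapses the limit spectrum from $\{0,\be_1,\dots,\be_m\}$ to $\{\al_1,\dots,\al_m\}$ and aligns the count of eigenvalues ($m$ rather than $m+1$) with the dimension of $\HS^D$. The only delicate point is to ensure that the Poincar\'e constant on $B_0$ is $\eps$-independent, which holds because $B_0$ is a fixed Lipschitz domain and $\partial Y\cap\partial B_0$ has positive $(n-1)$-dimensional measure; the remaining estimates carry over \emph{mutatis mutandis} from Lemma \ref{lemmaN}.
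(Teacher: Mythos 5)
Your proposal is correct and follows essentially the same route as the paper: the same limit space $\HS^D=\C^m$ with the $|B_j|$-weighted inner product, the same diagonal limit operator $\AA^D=\mathrm{diag}(\al_1,\dots,\al_m)$, the same identification operator $\J\e^D$, and the same test functions $v\e$ in the weak formulation. The only deviation is in how you force $u_0\equiv0$: the paper invokes compactness of the trace, arguing that $u_{\eps_m}\to u_0$ strongly in $\L(\partial Y)$ and hence $u_0=0$ a.e.\ on $\partial Y$, whereas you apply the Friedrichs--Poincar\'e inequality on the fixed domain $B_0$ (legitimate since $\partial Y\subset\partial B_0$ and $u\e$ vanishes there) to get $\|u\e\|^2_{\L(B_0)}\leq C\|\nabla u\e\|^2_{\L(B_0)}=\mathcal{O}(\eps^2)$ directly; both are valid and of essentially the same difficulty, with yours having the minor advantage of being quantitative.
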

	
	\begin{proof}
		The proof resembles the one of Lemma~\ref{lemmaN}, thus we underline only the principal 
		differences.
		Let $\HS^D$ be the  space $\C^{m}$ equipped with the  scalar product
		\begin{gather}\label{scalar:D}
			(\mathbf{u},\mathbf{v})_{\HS^D}=\suml_{j\in\M} u_j \overline{v_j} |B_j|
		\end{gather}
		(its elements  are denoted by bold letters, their entries are enumerated  from $1$ to $m$).
		In the space $\HS^D$ we introduce the sesquilinear form $\aa^D$ via
		$$
		\aa^D[\mathbf{u},\mathbf{v}]=\suml_{j\in\M} \al_j |B_j| u_j\overline{v_j},\quad
		\dom(\aa^D)=\HS^D.
		$$
		The associated (with respect to the scalar product \eqref{scalar:D}) operator $\AA^D$ is represented by the $m\times m$  matrix
		$$\AA^D=\mathrm{diag}(\al_1,\dots,\al_m).$$
		Its eigenvalues are given by
		$\lambda_{1}(\AA^D )\leq\lambda_{2}(\AA^D )\leq\dots\leq \lambda_{m }(\AA^D )$, and
		we have
		\begin{gather}\label{lambdaD}
			\lambda_{k}(\AA^D )=\al_k,\ k=1,\dots,m.
		\end{gather}	
		Below we   demonstrate that  for $k=1,\dots,m $ one has
		\begin{gather}
			\label{conv:lambdaD}
			\lambda_k(\AA^D\e)\to \lambda_k(\AA^D)\text{ as }\eps\to 0.
		\end{gather}
		Then the desired convergence result \eqref{estD} follows immediately from \eqref{lambdaD}--\eqref{conv:lambdaD}.

		For the proof of \eqref{conv:lambdaD} we again use   Theorem~\ref{th:IOS}.	
		As before, we denote
		$
		\HS\e\ceq \L(Y\e)$, and introduce compact  non-negative  operators 
		$\Res\e^D\ceq (\AA\e^D+\Id)^{-1}$ and $ \Res^D \ceq (\AA^D+\Id)^{-1}$ acting in $\HS\e$ and $\HS^D$, respectively. 
		One has
		\begin{gather}\label{A2+}
			\|\Res\e^D\|\leq 1.
		\end{gather} 
		We denote by
		$\{\mu_{k,\eps}^D\}_{k\in\N}$  the set of the eigenvalues of $\Res\e^D$ being renumbered in the descending order and with account of their
		multiplicity; similarly, $\mu_1^D\geq \mu_2^D\ge\dots\ge \mu_{m+1}^D$ stand for the eigenvalues of the operator  $\Res^D$. One has
		\begin{gather}\label{SMT12}
			\mu\ke^D=(\lambda_k(\AA\e^D)+\Id)^{-1},\ k\in\N,
			\qquad
			\mu_k^D=(\lambda_k(\AA^D)+\Id)^{-1},\ k=1,\dots,m,
		\end{gather} 
		Finally, we introduce  the   operator  $\J^D\e:  \HS^D\to \HS\e$ acting on  
		$\mathbf{f}=(f_0,\dots,f_m)$
		as follows,
		$$
		(\J^D\e\mathbf{f})(x)=
		\begin{cases}
			f_j,&x\in B_j,\ j\in\M ,
			\\
			0,&x\in B_0\cup\left(\cup_{j\in\M}T\je\right).
		\end{cases}
		$$
		Obviously, for each $\mathbf{f}\in\HS^D $ we have
		\begin{gather}\label{A1+}
			\|\J\e^D \mathbf{f}\|_{\HS\e}=\|\mathbf{f}\|_{\HS^D}.
		\end{gather}
		
		The conditions $(A_1)$ and $(A_2)$ of Theorem~\ref{th:IOS} are fulfilled,  see \eqref{A2+} and \eqref{A1+}. Let us check the fulfillment of the condition $(A_3)$.
		Let $\mathbf{f}\in\HS^D$,
		$f\e\ceq \J\e^D \mathbf{f}$ and 
		\begin{gather}
			\label{ue:def0+}
			u\e\ceq \Res^D\e f\e. 
		\end{gather}
		Then 
		$u\e\in  \H^1(Y\e)$, $u\e=0$ on $\partial Y$, 
		\begin{gather}\label{weak:main:D}
			\aa\e^D [u\e,v\e]+(u\e,v\e)_{\HS\e}=(f\e,v\e)_{\HS\e},\ \forall v\e\in \dom(\aa\e^D).
		\end{gather}
		and the estimate
		\begin{gather}\label{apriori:D}
			\eps^{-2}\|\nabla u\e\|_{\L(Y\e)}^2+\|u\e\|_{\L(Y\e)}^2\leq  
			\|\mathbf{f}\|_{\HS^D}
		\end{gather}
		holds true.
		It follows from \eqref{apriori:D} that the norms $\|u\e\|_{\H^1(Y\e)}$ are uniformly
		bounded with respect to  $\eps\in (0,1]$, whence 
		there exists a sequence $(\eps_m)_{m\in\N}$ with $\eps_m\searrow 0$ as $m\to\infty$ and 
		the constant functions $u_j $, $j\in\M_0$ such that
		\eqref{weak}--\eqref{strong} hold.
		Moreover,  
		$u_{\eps_m}\to u_0$ strongly in $\L(\partial Y)$,
		 hence $u_0=0$ a.e. on $\partial Y$, and, since $u_0$ is a constant function, we conclude
		\begin{gather}
			\label{u0=0}
			u_0\equiv 0.
		\end{gather}
		In the following we regard
		$\mathbf{u}=(u_1,\dots,u_{m})$ as the element of $\HS^D$.
		We fix an arbitrary $\mathbf{v}=(v_1 ,\dots,v_m)\in\HS^D$, and define the function
		$v\e\in\dom(\aa\e^D)$ by
		\begin{gather*}
			v\e(x)\ceq
			\begin{cases}
				v_j,&x\in B_j,\ j\in\M,\\
				0,&x\in B_0,\\
				-\frac{v_j}{   h_j} (x^n-z_j^n) +\frac{v_j}{   2},&x\in T\je,\ j\in\M.
			\end{cases}
		\end{gather*}
		Inserting  $v\e$ into \eqref{weak:main:D}, we obtain the equality
		\begin{gather}\label{weak:ve:D}
			-\eps^{-2}\sum_{j\in\M}\left(\int_{T\je}\frac{\partial u\e}{  \partial x^n}\d x\right) \frac{\overline{   v_j}}{   h_j}+
			\sum_{j\in\M }\la u\e \ra_{B_j} \overline{v_j}|B_j|+\sum_{j\in\M}(u\e, v\e)_{\L(T\je)}=
			(\mathbf{f},\mathbf{v})_{\HS^D}.
		\end{gather}
		Repeating  the arguments from the proof of   Lemma~\ref{lemmaN}
		(taking into account \eqref{u0=0}) we get
		\begin{gather} \label{weak:ve:D:1}
			-\lim_{m\to\infty}(\eps_m)^{-2}\sum_{j\in\M}\left(\int_{T_{j,\eps_m}}\frac{\partial u_{\eps_m}}{  \partial x^n}\d x\right) \frac{\overline{   v_j}}{   h_j}=
			\aa^D[\mathbf{u},\mathbf{v}], 
			\\ \label{weak:ve:D:2}
			\lim_{m\to\infty}\sum_{j\in\M}\la u_{\eps_m} \ra_{B_j} \overline{v_j}|B_j|
			=(\mathbf{u},\mathbf{v})_{\HS^D},
			\quad
			\lim_{\eps\to 0}\sum_{j\in\M}(u\e, v\e)_{\L(T\je)}=0.
		\end{gather}
		From \eqref{weak:ve:D}--\eqref{weak:ve:D:2} we conclude
		$
		\aa^D[\mathbf{u},\mathbf{v}]+(\mathbf{u},\mathbf{v})_{\HS^D}=(\mathbf{f},\mathbf{v})_{\HS^D},\
		\forall v\in{\HS^D},
		$
		whence 
		\begin{gather}\label{u=Resf:D}
			\mathbf{u}=\Res^D \mathbf{f}.
		\end{gather} 
		The limiting vector $\mathbf{u}$  is independent of a  sequence $u_{\eps_m}$, whence
		the whole family $u\e$ converges to $\mathbf{u}$, namely,
		\begin{gather}\label{strong:whole+}
			u_{\eps}\to   u_j\text{ strongly in }\L(B_j)\text{ as }\eps\to 0.
		\end{gather}
		Furthermore (cf.~\eqref{T:est:final}, \eqref{T:est:final+}), one has
		\begin{gather}\label{T:est:final:D}
		\sum_{j\in\M}\|u\e\|^2_{\L(T\je)}\to 0\text{ as }\eps\to 0.
		\end{gather}
		The  property $(A_3)$ follows immediately from \eqref{ue:def0+}, \eqref{u0=0}, \eqref{u=Resf:D},  \eqref{strong:whole+}, \eqref{T:est:final:D} and the definition of the operator $\J\e^D$.
		The property $(A_4)$ is proven similarly (cf.~Lemma~\ref{lemmaN}).
		
		By Theorem~\ref{th:IOS} and Remark~\ref{rem:IOS}  the fulfillment of the conditions 
		$(A_1) -(A_4)$ yield
		$$\mu_{k,\eps}^D\to \mu_k^D\text{ as }\eps\to 0,\ k=1,\dots,m,$$
		whence, using \eqref{SMT12}, we get the desired convergence  \eqref{conv:lambdaD}. Lemma~\ref{lemmaD} is proven.	
	\end{proof}

	\begin{lemma}
		\label{lemmaantiper}
		Let $\theta\notin(0,0,\dots,0)$. Then
		for any $k\in\{1,\dots,m\}$ one has
		\begin{gather*}
			\lambda_{k}(\AA^\theta\e)\to \al_k,\ \eps\to 0.
		\end{gather*}
	\end{lemma}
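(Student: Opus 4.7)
The plan is to mimic the proof of Lemma~\ref{lemmaD} almost verbatim, since the key feature of the Dirichlet case --- namely that the limiting constant $u_0$ on $B_0$ is forced to vanish --- is also present for the quasi-periodic case once $\theta\neq(0,0,\dots,0)$. Concretely, I would take the same target Hilbert space $\HS^D=\C^m$ with weighted inner product \eqref{scalar:D}, the same diagonal operator $\AA^D=\mathrm{diag}(\al_1,\dots,\al_m)$, and the same identification operator $\J\e^D:\HS^D\to\HS\e=\L(Y\e)$, and apply Theorem~\ref{th:IOS} with $\Res\e^\theta\ceq(\AA\e^\theta+\Id)^{-1}$ and $\Res^D\ceq(\AA^D+\Id)^{-1}$. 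Since $\lambda_k(\AA^D)=\al_k$ for $k=1,\dots,m$, the spectral mapping theorem will reduce the statement to the convergence $\mu_{k,\eps}^\theta\to\mu_k^D$.

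Conditions $(A_1)$ and $(A_2)$ are immediate (as in Lemmas~\ref{lemmaN} and~\ref{lemmaD}). For $(A_3)$, given $\mathbf{f}\in\HS^D$, let $u\e\ceq\Res_\eps^\theta\J\e^D\mathbf{f}$. This $u\e$ lies in $\H^{1,\theta}(Y\e)$ and satisfies the analogue of \eqref{weak:main:D} tested against $v\e\in\H^{1,\theta}(Y\e)$, together with the a priori estimate analogous to \eqref{apriori:D}. Extracting a subsequence $\eps_m\to 0$ via Banach--Alaoglu and Rellich--Kondrachov, we obtain strong $\L(B_j)$-limits $u_j$ on each $B_j$, $j\in\M_0$, and the energy bound forces each $u_j$ to be constant. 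The key new step is to establish that $u_0=0$: since $u_{\eps_m}$ converges in $\L(\partial Y\cap\partial B_0)$ to $u_0$, and the quasi-periodic relation \eqref{quasi} passes to the limit in the trace topology, one obtains $u_0=\exp(i\theta_k)u_0$ for every $k\in\{1,\dots,n\}$; because $\theta\neq(0,\dots,0)$, some $\theta_k\neq0$ and hence $u_0=0$.

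Once $u_0\equiv 0$ is established, I would choose, for an arbitrary test vector $\mathbf{v}=(v_1,\dots,v_m)\in\HS^D$, exactly the same piecewise-affine test function $v\e$ as in the proof of Lemma~\ref{lemmaD} (constants $v_j$ on $B_j$ for $j\in\M$, zero on $B_0$, and affine interpolation along $T\je$); this $v\e$ automatically lies in $\H^{1,\theta}(Y\e)$ because it vanishes in a neighbourhood of $\partial Y$. Inserting $v\e$ into the weak formulation and using the mean-value estimates \eqref{meanest1}--\eqref{meanest2} together with $u_0=0$, one obtains in the limit
\[
\aa^D[\mathbf{u},\mathbf{v}]+(\mathbf{u},\mathbf{v})_{\HS^D}=(\mathbf{f},\mathbf{v})_{\HS^D},
\]
that is, $\mathbf{u}=\Res^D\mathbf{f}$. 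Uniqueness of the limit then upgrades the subsequential convergence to the full family, and the $\L(T\je)$ contribution is controlled by the inequality \eqref{T:est:final}, yielding $(A_3)$. The verification of $(A_4)$ follows the same recipe as in Lemmas~\ref{lemmaN}--\ref{lemmaD}, again using the implication $\theta\neq 0\Rightarrow u_0=0$ to identify the limit in $\HS^D$.

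The only non-routine point is the trace argument forcing $u_0=0$; everything else is a transcription of the proof of Lemma~\ref{lemmaD} with $\dom(\aa\e^D)$ replaced by $\H^{1,\theta}(Y\e)$. I would therefore present the proof by stating that it repeats verbatim the proof of Lemma~\ref{lemmaD} up to the observation that the limiting constant on $B_0$ vanishes thanks to the quasi-periodicity with $\theta\neq(0,\dots,0)$.
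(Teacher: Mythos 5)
Your proposal is correct and matches the paper's own argument: the paper also remarks that Lemma~\ref{lemmaantiper} is proved by repeating the proof of Lemma~\ref{lemmaD}, with the single modification that the quasi-periodic conditions \eqref{quasi} pass to the limit and force the limiting constant $u_0$ on $B_0$ to satisfy $u_0=\exp(i\theta_k)u_0$ for all $k$, hence $u_0\equiv0$ when $\theta\neq(0,\dots,0)$. Your observation that the test functions $v_\eps$ automatically lie in $\H^{1,\theta}(Y_\eps)$ because they vanish near $\partial Y$ is the right way to see that the rest of the Dirichlet proof carries over verbatim.
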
	
	
	The proof of Lemma~\ref{lemmaantiper} is similar to the proof of Lemma~\ref{lemmaD}.
	Here we have to take into account that
	if $u\e$ satisfies  
	the boundary conditions \eqref{quasi}, then \eqref{weak}--\eqref{strong} imply that
	the limiting constant function $u_0$  satisfies \eqref{quasi} too, but for $\theta\notin(0,0,\dots,0)$ this is possible 
	only if $u_0\equiv 0$.
	
	\subsection{End of proof}
	\label{subsec:3:6}
	
	We denote
	$$
	\theta_0\ceq (0,0,\dots,0),\quad \theta_\pi\ceq (\pi,\pi,\dots,\pi).
	$$
	By virtue of \eqref{repres1} and \eqref{enclosure} we get
	\begin{gather}\label{FB+}
		\ds\sigma(\AA\e)=\cupl_{k\in\N} L\ke,
	\end{gather}
	where  the compact intervals $L\ke=[\ell\ke^-,\ell\ke^+]$ satisfy 
	\begin{gather*}
		\lambda_k(\AA^N\e)\leq \ell\ke^- \leq \lambda_k(\AA^{\theta_0}\e),\quad
		\lambda_k(\AA^{\theta_\pi}\e)\leq \ell\ke^+ \leq \lambda_k(\AA^D\e).
	\end{gather*}
	Due to \eqref{lambda1N} and \eqref{lambda1per} one has
	\begin{gather}\label{1}
		\ell_{1,\eps}^-=0.
	\end{gather}	
	Furthermore, Lemmata~\ref{lemmaN}--\ref{lemmaper} yield
	\begin{gather}\label{2}
		\lim_{\eps\to 0}\ell\ke^-= \beta_{k-1},\ k=2,\dots,m+1,
	\end{gather}	
	and  Lemmata~\ref{lemmaD}--\ref{lemmaantiper} give
	\begin{gather}\label{3}
		\lim_{\eps\to 0}\ell\ke^+=\al_{k},\ k=1,\dots,m.
	\end{gather}
	Setting $\al\ke\ceq \ell\ke^+$, $\beta\ke\ceq \ell_{k+1,\eps}^-$	($k=1,\dots,m$)
	we conclude from \eqref{FB+}--\eqref{3}, \eqref{inter} and Lemma~\ref{lemma:La} the desired
	properties \eqref{th1:1}--\eqref{th1:2}.
	Theorem~\ref{th1} is proven.
	
	\section{Proof of Theorem~\ref{th2}}\label{sec:4}
	
	Let $(\wt \al_j)_{j\in\M}$  and $(\wt \be_j)_{j\in\M}$  be positive numbers satisfying
	\eqref{inter+}. 
	We consider the following system of $m$ linear  
	equations with unknowns $\rho_j$, $j=1,\dots,m$:
	\begin{gather}\label{system}
		1+\suml_{j=1}^m \rho_j\frac{\wt\al_j}{   \wt\al_j-\wt\be_k} =0,\quad
		k=1,\dots ,m.
	\end{gather}
	It was shown in \cite[Lemma~4.1]{Kh12} that the system \eqref{system} 
	has the unique solution $\rho_1,\dots,\rho_m$ given by
	\begin{gather}\label{prod}
		\rho_j=\frac{\wt\be_j-\wt\al_j}{  \wt\al_j}\prod\limits_{i=\overline{1,m}|i\not=
			j}\ds\left(\frac{\wt\be_i-\wt\al_j}{    \wt\al_i-\wt\al_j}\right),\ j\in\M.
	\end{gather}
	Note that due to \eqref{inter+} we have
	\begin{gather*}
		\forall j:\ \wt\al_j<\wt\be_j,\quad \forall i\not= j:\
		\mathrm{sign}(\wt\be_i-\wt\al_j)=\mathrm{sign}(\wt\al_i-\wt\al_j)\not= 0
	\end{gather*}
	Consequently, the   numbers $\rho_j$ are positive.
	
	Let $\gamma>0$. We set 
	$$ 
	\tau_j\ceq\frac{\rho_j}{   \ds \gamma^n+\suml_{i\in\M}\rho_i},\ j\in\M,
	$$
	where $\rho_j$ are given by \eqref{prod}.
	One has
	\begin{gather}\label{tau:prop}
		\tau_j>0\text{ and }\sum_{i\in\M}\tau_j<1.
	\end{gather}
	
	Now, let $F_j$, $j=1,\dots,m$ be hyperrectangles with axes being parallel to the coordinate ones and satisfying
	\begin{gather*}
		\cup_{j\in\M}\overline{F_j}\subset Y,\quad \overline{F_i}\cap\overline{F_j}=\emptyset,\ i\not=j,\quad
		|F_j|=\tau_j.
	\end{gather*}
	It is easy to see that such a choice is always possible due to \eqref{tau:prop}.
	Let $y_j\in F_j$ be the center of the hyperrectangle $F_j$; then we define 
	$$B_j\ceq \gamma(F_j-y_j)+y_j$$
	(i.e., $B_j$ is obtained from $F_j$ by a homothety with the center at $y_j$ and the ratio $\gamma$).
	In the following we choose $\gamma<1$, which implies $\overline{B_j}\subset F_j$.
	Further, we remove  from $F_j\setminus\overline{B_j}$ the set $T\je$ 
	of the form \eqref{Tke}; due to the fact that $F_j$ and $B_j$ are two homothetic hyperrectangle with axes being parallel to the coordinate ones, one can always do this  in such a way that the assumptions \eqref{pprop1}--\eqref{pprop3} are fulfilled.

	We choose an arbitrary cross-section profile $D_j$, while the constant $\eta_j$ in \eqref{etaje}
	are chosen as follows:
	\begin{gather*}
		\eta_j=\left(\frac{\wt\al_j	h_j|B_j|}{   |D_j|}\right)^{1/(n-1)}.
	\end{gather*}
	With such a choice of $\eta_j$ we immediately obtain
	\begin{gather}\label{alal}
		\al_j=\wt\al_j,\ j\in\M,
	\end{gather}
	where $\al_j$ are defined by \eqref{Ak}.
	Moreover, we have
	\begin{gather}\label{B/B}
		\frac{|B_j|}{    |B_0|}=
		\frac{\gamma^n|F_j|}{    |Y| -\sum_{i\in\M}|F_i|}
		=
		\frac{\gamma^n \tau_j}{    1 -\sum_{i\in\M}\tau_j}=
		\rho_j.
	\end{gather}
	Since $(\rho_1,\dots,\rho_m)$  is a solution to the system \eqref{system},
	we conclude from \eqref{alal} and \eqref{B/B} that
	\begin{gather*}
		1+\sum_{j\in\M}\frac{\al_j |B_j|}{    |B_0|(\al_j-\wt\be_k)}=0,\ k=1,\dots,m.
	\end{gather*}
	Hence $\wt\beta_j$ are zeros of the function $F(\lambda)$ (see~\eqref{mu_eq}), and consequently
	(taking into account \eqref{inter}, \eqref{inter+}), we obtain
	\begin{gather*}
		\be_j=\wt\be_j,\ j\in\M.
	\end{gather*}
	Theorem~\ref{th2} is proven.

	\section*{Acknowledgment\label{sec:ack}}
	
	The work of A.K. is partly supported by the Czech Science Foundation (GA\v{C}R) through the project 21-07129S.


\begin{thebibliography}{99}
		
		\bibitem{AHH91} J.~Arrieta, J.~Hale and Q.~Han, \emph{Eigenvalue problems for nonsmoothly perturbed domains}, J. Differ. Equations  \textbf{91} (1991), No.~1, 24--52,
		
		
		\bibitem{BK15}
		D.~Barseghyan and A.~Khrabustovskyi, 
		\emph{Gaps in the spectrum of a periodic quantum graph with periodically distributed $\delta'$-type interactions}, 
		J. Phys. A, Math. Theor. \textbf{48} (2015), No.~25,  255201.
		
		\bibitem{CK17}
		G.~Cardone, A.~Khrabustovskyi,
		\emph{Spectrum of a singularly perturbed periodic thin waveguide},
		J. Math. Anal. Appl. \textbf{454} (2017), No.~2, 673--694.
		
		\bibitem{CH53}
		R.~Courant and D.~Hilbert, \emph{Methods of Mathematical Physics. Vol. 1}, 
		Wiley--Interscience, New York, 1953.
		
		\bibitem{De95}
		E.B.~Davies,
		\emph{Spectral Theory and Differential Operators}, 
		Cambridge University Press, Cambridge, 1995.
		
		\bibitem{Dorfler} W.~D\"{o}rfler, A.~Lechleiter, M.~Plum, G.~Schneider and
		C.~Wieners, \emph{Photonic Crystals. Mathematical Analysis and Numerical
			Approximation}, Springer, Berlin, 2011.
		
		\bibitem{E73} M.~Eastham, \emph{The Spectral Theory of Periodic Differential Equations},
		Scottish Academic Press, Edinburg, 1973.
		
		\bibitem{EK18}
		P.~Exner and A.~Khrabustovskyi,
		\emph{Gap control by singular Schr\"odinger operators in a periodically structured metamaterial},
		J. Math. Phys. Anal. Geom. \textbf{14} (2018), No.~3, 270--285.
		
		\bibitem{FK96a} A.~Figotin and P.~Kuchment, \emph{Band-gap structure of the spectrum
			of periodic dielectric and acoustic media. I. Scalar model}, SIAM
		J. Appl. Math. \textbf{56} (1996), No.~1, 68--88.
		
		\bibitem{FK96b} A.~Figotin and P.~Kuchment, \emph{Band-gap structure of the spectrum
			of periodic dielectric and acoustic media. II. Two-dimensional
			photonic crystals}, {SIAM J. Appl. Math.} \textbf{56} (1996), No.~6, 1561--1620.
		
		
		\bibitem{G97} E.L.~Green, \emph{Spectral theory of Laplace-Beltrami operators with periodic metrics},
		J. Differ. Equ. \textbf{133} (1997), No.~1, 15--29.
		
		
		
		\bibitem{HL00} R.~Hempel and K.~Lienau, \emph{Spectral properties of periodic media
			in the large coupling limit}, Commun. Partial Differ. Equations \textbf{25} (2000), No.~7--8,
		1445--1470.
		
		\bibitem{HP03} R.~Hempel and O.~Post, \emph{Spectral gaps for periodic elliptic operators
			with high contrast: an overview}, Progress in Analysis, Proceedings
		of the 3rd International ISAAC Congress Berlin 2001, Vol. 1,
		577-587, 2003.
		
		\bibitem{HMW61} H.~H\"{o}nl, A.W.~Maue and K.~Westpfahl, \emph{Theorie der Beugung} [Theory of Diffraction], in: Encyclopedia of Physics, Vol. 5/25/1,  Springer-Verlag, Berlin, 1961, pp. 218--573.
		
		\bibitem{IOS89}
		G.A.~Iosif'yan, O.A.~Oleinik and A.S.~Shamaev,  
		\emph{On the limiting behaviour of the spectrum of a sequence of operators defined on different Hilbert spaces},
		{Russ. Math. Surv.} \textbf{44} (1989), No.~3, 195--196.
		
		\bibitem{K66}
		T.~Kato, \emph{Perturbation Theory for Linear Operators}, Springer-Verlag, Berlin, 1966.
		
		\bibitem{Kh12}
		A.~Khrabustovskyi, 
		\emph{Periodic Riemannian manifold with preassigned gaps in spectrum of Laplace-Beltrami operator},
		J. Differ. Equations 252 (2012), No.~3, 2339--2369.
		
		\bibitem{Kh13a}
		A.~Khrabustovskyi, 
		\emph{Homogenization of the spectral problem on the Riemannian manifold consisting of two domains connected by many tubes},
		Proc. R. Soc. Edinb., Sect. A, Math. \textbf{143} (2013), No.~6, 1255--1289.
		
		
		
		
		\bibitem{Kh13b}  
		A.~Khrabustovskyi,
		\emph{Periodic elliptic operators with asymptotically preassigned spectrum},
		Asymptotic Anal. \textbf{82} (2013), No.~1--2, 1--37.
		
		
		\bibitem{Kh14}
		A.~Khrabustovskyi,  \emph{Opening up and control of spectral gaps of the Laplacian in periodic domains}, {J. Math. Phys.}  \textbf{55} (2014), No.~12, 121502.
		
		\bibitem{Kh20}
		A.~Khrabustovskyi
		\emph{Periodic quantum graphs with predefined spectral gaps},
		J. Phys. A, Math. Theor. \textbf{53} (2020), No.~40, 405202.
		
		
		\bibitem{KK15}
		A.~Khrabustovskyi and E.~Khruslov, 
		\emph{Gaps in the spectrum of the Neumann Laplacian generated by a system of periodically distributed traps}, Math. Methods Appl. Sci. \textbf{38} (2015), No.~1, 11--26.
		
		\bibitem{Ku93} P.~Kuchment, \emph{Floquet Theory For
			Partial Differential Equations}, Birkhauser, Basel, 1993.
		
		\bibitem{Ku16}
		P.~Kuchment, 
		\emph{An overview of periodic elliptic operators},
		Bull. Am. Math. Soc., New Ser.\textbf{53} (2016), No.~3, 343--414.
		
		\bibitem{LS17}
		A.~Lamacz and B. Schweizer, 
		\emph{Effective acoustic properties of a meta-material consisting of small helmholtz
			resonators}, Discrete Contin. Dyn. Syst. Ser. S \textbf{10} (2017), No.~4, 815--835.
		
		\bibitem{MK06}
		V.A.~Marchenko and  E.Ya.~Khruslov, \emph{Homogenization of Partial Differential Equations}, Birkh\"auser, Boston, 2006.
		
		\bibitem{OIS92}
		O.A.~Oleinik, A.S.~Shamaev and G.A.~Yosifian,  
		\emph{Mathematical Problems in Elasticity and Homogenization},
		North-Holland, Amsterdam, 1992.
		
		
		
		\bibitem{Sch15}
		B.~Schweizer, 
		\emph{The low-frequency spectrum of small Helmholtz resonators}, 
		Proc. A, R. Soc. Lond. \textbf{471} (2015), No.~2174, 20140339.
		
		\bibitem{Sch17}
		B.~Schweizer, \emph{Resonance meets homogenization. construction of meta-materials with astonishing properties}, Jahresber. Dtsch. Math.-Ver. 119 (2017), 31--51.
		
		\bibitem{Zh05} V.V.~Zhikov, \emph{On spectrum gaps of some divergent elliptic operators with
			periodic coefficients}, St. Petersb. Math. J. 16 (2005), No.~5,
		773--790.
		
	\end{thebibliography}
\end{document}